\newcommand{\A}{\mathcal{A}}
\renewcommand{\set}[1]{\{#1\}}
\newcommand{\ignore}[1]{}
\newtheorem{theorem}{Theorem}
\newtheorem{corollary}[theorem]{Corollary}
\newtheorem{lemma}[theorem]{Lemma}
\newtheorem{proposition}[theorem]{Proposition}
\newcommand\figcaption{\def\@captype{figure}\caption}
\newcommand\tabcaption{\def\@captype{table}\caption}
\newtheorem{conjecture}[theorem]{Conjecture}
\newtheorem{remark}[theorem]{Remark}
\title{Fractional balanced chromatic number and \\ arboricity of planar (signed) graphs}
\author[1]{Reza Naserasr}
\author[1]{Lan Anh Pham}
\author[1]{Cyril Pujol}
\author[1,2]{Huan Zhou}
\affil[1]{\small Universit\'e Paris Cit\'e, CNRS, IRIF, F-75013, Paris, France. {Emails: \texttt{\{reza, cpujol, zhou\}@irif.fr}}}
\affil[2]{\small Zhejiang Normal University, Jinhua, China. {Email: \texttt{huanzhou@zjnu.edu.cn}}}
\begin{document}
	
	\setlength\parindent{0pt}
	\maketitle

	\begin{abstract}
		\setlength\parindent{0pt}
		\noindent
		A fractional coloring of a signed graph $(G, \sigma)$ is an assignment of nonnegative weights to the balanced sets (sets which do not induce a negative cycle) such that each vertex has an accumulated weight of at least 1. The minimum total wight among all such colorings is defined to be the fractional balanced chromatic number, denoted by $\chi_{fb}(G, \sigma)$. This value is clearly upper bounded by the fractional arboricity of $G$, denoted $a_f(G)$, where weights are assigned to sets inducing no cycle rather than sets inducing no negative cycle. 
		
		In this work we present an example of a planar signed simple graph of fractional balanced chromatic number larger than 2, thus in particular refuting a conjecture of Bonamy, Kardo\v{s}, Kelly, and Postle suggesting that the fractional arboricity of planar graphs is bounded above by 2. 
		
		By iterating the construction, we show that the supremum of the fractional balanced chromatic number of  planar signed simple graphs is at least as $\frac{83}{41}=2+\frac{1}{41}$. With similar operations, we built a sequence of planar graphs whose limit of fractional arboricity is $a_{f}(G)=2+\frac{2}{25}$.    		
	\end{abstract}

	\section{Introduction}
	
	The four-color theorem has been the driving force behind most developments in graph theory. It is simple, beautiful, and can be stated in numerous ways, each of which offers a different insight and a different direction of study. One of the earliest reformulation, proposed by P.G. Tait in 1884 \cite{Tait84}, led to the development of theory of (nowhere zero) flows and established connection with the Hamiltonicity of cubic bridgeless planar graphs. More precisely, Tait conjectured that every $3$-connected cubic planar graph is Hamiltonian, noting that such cubic graphs are, in particular, $3$-edge-colorable. Prior to the conjecture, he had presented the first reformulation of the four-color conjecture: every cubic bridgeless planar graph is $3$-edge-colorable. 
	
	It took about six decades until W. Tutte \cite{Tutte46} disproved this strong conjecture by building a $3$-connected cubic planar graph which is not Hamiltonian. Tutte's example was composed of three identical pieces, which are now called Tutte's fragment.
	
	The dual of Tutte's fragment is known as Wenger's graph and is used by various authors to disprove various potential extensions or relaxations of the four-color theorem. Wenger, in \cite{Wegner73}, used this gadget to build a planar graph which cannot be partitioned into a bipartite graph and a forest. In this work we similarly use Wenger's gadget to build some examples of planar (signed) graphs whose fractional balanced chromatic number and whose fractional arboricity is strictly larger than 2. To that end we should first introduce these two notions.
	
	\subsection{Vertex arboricity}
	Considering acyclic sets instead of independent sets as color classes we have the notion of \emph{arboricity} of a graph: $a(G)$ is the minimum number of acyclic subsets of $V(G)$ covering all vertices of $G$. One of the key conjectures in this subject is a conjecture of Albertson and Berman \cite{AB79} which claims every planar graph $G$ has an acyclic set of order at least $\frac{|V(G)|}{2}$. 
	
	\emph{Fractional arboricity} of $G$, denoted $a_f(G)$, is the minimum of $\frac{p}{q}$ such that one can assign (at least) $q$ colors to each vertex using only a total of (at most) $p$ colors such that each color class is acyclic. With the size of maximum acyclic set being limited by $\frac{|V(G)|}{a_f(G)}$ and that this is normally a fair enough of an estimate, Bonamy, Kardo\v s, Kelly and Postle conjectured in \cite{BKKP20} that:    
	
	\begin{conjecture}\label{conj:a_f}
		Every planar graph satisfies $a_f(G)\leq 2$. 
	\end{conjecture}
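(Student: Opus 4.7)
Since the abstract announces a counterexample to Conjecture~\ref{conj:a_f}, my plan is to refute the conjecture rather than prove it. The abstract notes the inequality $\Xfb{G,\sigma} \leq a_f(G)$ valid for every signing $\sigma$, so it suffices to exhibit a planar signed simple graph $(G,\sigma)$ with $\Xfb{G,\sigma} > 2$; the underlying graph $G$ will then witness $a_f(G) > 2$ automatically.

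Following the hint dropped in the introduction, I would build the example from Wenger's gadget (the dual of Tutte's fragment, already used by Wegner to rule out a bipartite-plus-forest decomposition of planar graphs). The first step is to choose an appropriate signing of a single copy of Wenger's gadget and to analyze what any fractional balanced $2$-coloring forces on the few ``corner'' vertices through which copies will be glued. I expect these constraints to be \emph{almost but not quite} satisfiable within weight $2$, with a small residual slack that can be pinned down by the location of the negative cycles. The second step is to assemble several signed copies in a planar arrangement, cyclically so as to mimic Tutte's original construction, in such a way that the local slacks accumulate into a global infeasibility: no distribution of weight $2$ on balanced sets can simultaneously meet the demand at every corner.

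The cleanest way to certify the lower bound $\Xfb{G,\sigma} > 2$ is LP duality. The fractional balanced chromatic number equals the maximum of $\sum_v y_v$ over nonnegative vertex weights satisfying $\sum_{v \in B} y_v \leq 1$ for every balanced set $B$; hence it is enough to exhibit such weights with total mass strictly greater than $2$. Symmetry of the construction should let me define $y$ by a common local recipe on each copy of the gadget, so that the total mass computation reduces to one local number times the number of copies.

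The main obstacle is exactly the balanced-set verification: in the glued graph I must check that no balanced set $B$ exceeds $y$-mass $1$. This in turn rests on classifying the negative cycles inside the signed Wenger gadget and then understanding how balanced sets can combine pieces drawn from several copies; I would attack this by a case split on which corners of each gadget lie in $B$, together with a counting argument for the negative cycles linking corners across gadgets. Once a strictly positive deficit $\varepsilon$ is established for one instance of the construction, iterating by nesting fresh gadgets inside the corners of the previous one should amplify $\varepsilon$ toward the announced $\tfrac{1}{41}$, though reaching that exact constant will likely require additional optimization of the gluing pattern.
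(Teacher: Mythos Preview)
Your overall strategy---refute the conjecture via the inequality $\chi_{fb}(G,\sigma)\le a_f(G)$ by building a signed planar simple graph with $\chi_{fb}>2$ out of glued copies of a signed Wenger gadget---matches the paper's. The certification method differs: you propose LP duality (exhibit vertex weights $y_v\ge 0$ with $\sum_{v\in B}y_v\le 1$ for every balanced $B$ and total mass exceeding~$2$), whereas the paper argues directly against balanced $(2k,k)$-colorings. Its central device is the \emph{triangle property}: in any $(2k,k)$-coloring, a negative facial triangle with the property is hit by every color, and a positive one is never hit thrice by the same color. The paper shows this property can be \emph{forced} on any designated facial triangle by a planar enlargement (inserting a vertex to form $(K_4,-)$ inside a negative face, or the mini-gadget of Figure~\ref{fig:gadget1} inside a positive one). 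With seven triangles of the signed Wenger gadget $\widehat W$ so enforced, a short case analysis (Lemma~\ref{lem:mainGadget}) yields that the two corners $u,v$ must receive \emph{disjoint} color sets; gluing three copies onto a triangle $u_1u_2u_3$ then demands $3k$ colors, a contradiction. For the $\frac{83}{41}$ bound the paper again avoids explicit duals, recursing instead through the scalar $m_{p,q}$ (maximum number of colors missing the outer negative triangle of a gadget) and the inequalities $m_{p,q}\le 2p-4q$ and $m_{p,q}\le p-3q+21\,m_{p,q}$.

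The gap in your plan is precisely at the step you flag as the main obstacle. On the \emph{raw} signed Wenger gadget the maximal balanced sets containing both corners are not restrictive enough: three of them have five vertices (the sets $B_1,B_2,B_3$ of Lemma~\ref{lem:MissingTriangles}), so uniform or near-uniform dual weights will fail the constraint $\sum_{v\in B}y_v\le 1$, and there is no evident nonuniform choice that both satisfies all such constraints and pushes the total above~$2$. What makes the paper's construction work is the enlargement step: the added vertices create new negative cycles that eliminate exactly these oversized balanced sets (equivalently, force every balanced set containing both $u$ and $v$ to miss one of seven designated triangles). Your proposal does not contain an analogue of this idea; without it, neither the dual-weight route nor a direct argument on the bare gadget is likely to close.
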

	
	A well known result of Borodin \cite{Borodin76}, proving a conjecture of Gr\"unbaum, is that every planar graph admits a 5-coloring where each pair of colors induces a forest. 
	Given such a 5-coloring $\varphi(G)$, a $\frac{10}{4}$-coloring of $G$ is obtained as follows : the $2$-subsets of $\set{1,2,3,4,5}$ are the colors. A vertex $v$ is assigned the $4$ colors : $\set{\set{\varphi(v),i}\mid i \neq \varphi(v)}$.
	
	\begin{proposition}
		For every planar graph $G$, $a_f(G) \leq 2.5$.
	\end{proposition}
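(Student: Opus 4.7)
The plan is essentially to verify that the $\frac{10}{4}$-coloring sketched in the paragraph preceding the proposition is well-defined and satisfies the two requirements built into the definition of fractional arboricity: (a) each vertex receives exactly $q=4$ colors, and (b) each of the $p=10$ color classes induces an acyclic subgraph of $G$. Once both are checked, the definition of $a_f(G)$ immediately gives $a_f(G) \le \frac{p}{q} = \frac{10}{4} = 2.5$.

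First I would invoke Borodin's theorem, which provides a proper vertex coloring $\varphi:V(G)\to\{1,2,3,4,5\}$ such that, for every pair $\{a,b\}\subseteq\{1,2,3,4,5\}$, the subgraph induced by $\varphi^{-1}(a)\cup\varphi^{-1}(b)$ is a forest. Then I would fix the palette to be the set of $\binom{5}{2}=10$ two-element subsets of $\{1,2,3,4,5\}$ and, for each vertex $v$, assign the $4$ colors $\{\{\varphi(v),i\}\mid i\in\{1,2,3,4,5\}\setminus\{\varphi(v)\}\}$. Since there are exactly four choices of $i$, requirement (a) holds on the nose.

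For (b), I would observe that a color $\{a,b\}$ is assigned to exactly those vertices $v$ with $\varphi(v)\in\{a,b\}$, so the color class $\{a,b\}$ is precisely $\varphi^{-1}(a)\cup\varphi^{-1}(b)$. By the Borodin property this induces a forest, hence is acyclic. This verifies (b) for each of the ten color classes simultaneously.

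There is no genuine obstacle: the construction is explicit and the two checks are one-line combinatorial identities. The only substantive input is Borodin's theorem, which is taken as given. Consequently the $\frac{10}{4}$-coloring is a valid fractional arboricity coloring of $G$, yielding $a_f(G)\le \frac{10}{4}=2.5$ as claimed.
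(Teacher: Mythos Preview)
Your proposal is correct and follows exactly the approach the paper indicates in the paragraph preceding the proposition: you spell out the verification that the $\frac{10}{4}$-coloring built from Borodin's acyclic $5$-coloring assigns four colors to each vertex and that each color class $\{a,b\}$ coincides with $\varphi^{-1}(a)\cup\varphi^{-1}(b)$, hence is acyclic.
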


	\subsection{Balanced coloring}
	A different approach for extending four-color theorem, specially in relation to minor theory, comes from signed graphs. This has implicitly been in the literature since at least 1970's where Catlin \cite{Catlin79} proved that odd-$K_4$-free graphs are 3-colorable. In 1980's, Gerard's and Seymour, independently, proposed, as a conjecture, an extension of Catlin's result which would also extend Hadwiger's conjecture claiming that every graph with no odd-$K_t$ minor is $(t-1)$-colorable.
	
	To approach this sort of problems by induction, recently various notions of colorings of signed graphs have been considered. 
	
	We recall that a \emph{signed graph} $(G,\sigma)$ is a	graph $G$ together with the signature function $\sigma$ which assigns to each edge  a sign: positive ($+$) or negative ($-$). A set of vertices is said to be \emph{balanced} if it induces no negative cycle. 
	A reformulation of a conjecture of M\'a\v{c}ajov\'a, Raspaud, \v{S}koveira from 2016, \cite{MRS16}, is to claim that every signed planar (simple) graph can be partitioned into two balanced sets.
	
	This conjecture, was refuted by Kardo\v s and Narboni in \cite{KN21}. They translated the problem to a dual version and used Tutte's fragment to build a counterexample to the dual notion. A direct proof of the claim, using Wenger's graph, is given in \cite{NP22}.

	Given a signed graph $(G, \sigma)$, a \emph{balanced $(p,q)$-coloring}, if exists, is an assignment of $q$ colors to each vertex of $G$ from a platter of $p$ colors such that each colors class induces a balanced set, i.e., it induces no negative cycle. Balanced $(p,1)$-coloring is simply called \emph{balanced $p$-coloring}. In this exact formulation, it was first introduced by T. Zaslavsky \cite{Zaslavsky87}, who has named the corresponding parameter ``\emph{balanced partition number}''. A big part of the theory, and in particular the above mentioned example of Kardo\v s and Narboni in \cite{KN21} are developed under the name ``$0$-free'' coloring. In short, a \emph{$0$-free coloring} of a signed graph $(G, -\sigma)$ using color set $\{\pm1, \pm2, \cdots, \pm p\}$ is the same as balanced $p$-coloring of $(G,\sigma)$.

	The \emph{fractional balanced chromatic number} of a signed graph $(G, \sigma)$, denoted $\chi_{fb}(G, \sigma)$, is $\min {\frac{p}{q}}$ among integers $p$ and $q$ for which $(G,\sigma)$ admits a balanced $(p,q)$-coloring. The value exists if and only $(G, \sigma)$ has no negative loop. This notion is introduced only recently in \cite{JMNNQ24+} and \cite{KNWYZZ25+}.
	
	Assume $\mathcal{S}$ is a signed graph family such that every signed graph in $\mathcal{S}$ has no negative loop. We define the fractional balanced chromatic number of $\mathcal{S}$ as 
	
	$$\chi_{fb}(\mathcal{S})=\sup \{ \chi_{fb}(\hat{G})\mid \hat{G}\in \mathcal{S}  \}.$$
	
	Let $\mathcal{SP}$ be the planar signed simple graph family. As each forest is in particular a balanced set we have $\chi_{fb}(\mathcal{SP})\leq a_f(\mathcal{P}) \leq 2.5$, where $a_f(\mathcal{P})$ is the supremum among all fractional arboricity of planar graphs.
	
	Here using the signed version of Wenger's graph as a key gadget, we build a planar signed (simple) graph for which $\chi_{fb}(G, \sigma)>2$. The underlying graph of this construction, then, is a counterexample to Conjecture~\ref{conj:a_f}. 
	
	We first present a simple proof that our construction has fractional balanced chromatic number bigger than 2. We then calculate the actual fractional balanced chromatic number of our construction to tend to $2+\frac{1}{41}$ and its fractional arboricity to be $2+\frac{2}{31}$.

	\section{Example of planar signed graph satisfying $\chi_{fb}(\hat{G})>2$}
	We need to build a signed planar simple graph which admits no $(2k, k)$-coloring for any integer $k$. We start with the following definition as a property.\newline
	
	{\bf Triangle property.} Given a (planar) signed graph $(G, \sigma)$, a negative triangle of $(G,\sigma)$ is said to have the triangle property if in any balanced $(2k,k)$-coloring, each color appears at least in one of its vertices. A positive triangle is said to have the triangle property if each color appears at most on two of its vertices. 
	
	\begin{lemma}\label{lem:TriangleProperty}
		Given a signed plane graph $(G, \sigma)$, there exists a plane extension $(G', \sigma')$ such that in every balanced $(2k,k)$-coloring of $(G', \sigma')$, each facial triangle of $G$ has the triangle property. 
	\end{lemma}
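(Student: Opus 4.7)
The plan is local and face-by-face: for each facial triangle $T$ of $G$, I would insert a small planar signed gadget into the open face bounded by $T$, so that the presence of the gadget alone forces $T$ to have the triangle property in every balanced $(2k,k)$-coloring of the resulting graph. Since distinct facial triangles bound disjoint open faces, the gadgets do not interact, and their union with $(G,\sigma)$ produces the desired plane extension $(G',\sigma')$.

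For a negative facial triangle on vertices $a,b,c$, I would add a single new vertex $v$ in the face, joined to $a,b,c$. The identity that the signs of the four triangular faces of any $K_4$ multiply to $+$ means that, since $abc$ is negative, one can choose the signs of $va, vb, vc$ so that each of $vab$, $vbc$, $vac$ is also negative. Then every $3$-subset of $\{a,b,c,v\}$ induces a negative triangle, so no balanced color class contains three of these four vertices. Counting the $4k$ color-vertex incidences against the $2k$ available colors forces each color to appear on exactly two of $\{a,b,c,v\}$; hence any color used on $v$ is also used on one of $a,b,c$, and any color avoiding $v$ occurs on two of $a,b,c$. Either way, every color appears on at least one vertex of $T$.

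For a positive facial triangle on $a,b,c$, I would use a two-layer gadget. Insert three new vertices $v_1,v_2,v_3$ near the edges $ab, bc, ca$, each joined to the two endpoints of its edge, with signs chosen so that the triangles $abv_1$, $bcv_2$, $cav_3$ are all negative (always achievable by picking the two new signs with the appropriate product, irrespective of the sign of the corresponding edge of $T$). Then add edges $v_1v_2, v_2v_3, v_3v_1$ with signs whose product is negative so that $v_1v_2v_3$ is a negative triangle, and apply the previous paragraph's negative-triangle construction to it, inserting a fourth auxiliary vertex $w$ inside $v_1v_2v_3$. If a color $\alpha$ were to lie in $\varphi(a) \cap \varphi(b) \cap \varphi(c)$, then the negative triangles $abv_1, bcv_2, cav_3$ would force $\alpha \notin \varphi(v_i)$ for $i=1,2,3$; but the negative-triangle gadget on $v_1v_2v_3$ guarantees that every color appears on at least one of $v_1,v_2,v_3$, a contradiction. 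Hence no color lies on all three of $a,b,c$.

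The main technical content is the sign analysis inside the two gadgets: one needs the $K_4$ parity identity to see that the anti-balanced extension of a negative triangle is always realizable, and a brief case distinction on the sign pattern of $T$ in the positive case. Once this is in place, inserting the gadgets face by face and restricting a balanced $(2k,k)$-coloring of $(G',\sigma')$ to $V(G)$ yields the required triangle property on every facial triangle of $G$.
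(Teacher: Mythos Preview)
Your proposal is correct and takes essentially the same approach as the paper. For a negative facial triangle you add a single vertex to obtain a signed $K_4$ switching-equivalent to $(K_4,-)$ and use the $4k$-versus-$2k$ counting argument; for a positive facial triangle you insert an inner negative triangle whose three vertices are each joined to two vertices of the outer triangle so as to create three auxiliary negative triangles, and then apply the negative-triangle step to the inner face---this is exactly the octahedral mini-gadget of the paper's Figure~\ref{fig:gadget1}, with only cosmetic differences (you argue sign-realizability directly via the $K_4$ parity identity, while the paper first switches so that the outer triangle is all-positive and then exhibits the explicit signature).
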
 
	
	\begin{proof}
		If $T$ is a facial negative triangle, then we may add a new vertex $u_T$ to the face joining it to all three vertices and choose a signature such that the induced signed $K_4$ is switching equivalent to $(K_4, -)$. 
		
		As each triple of vertices of $(K_4,-)$ induces a negative triangle, in any $(2k,k)$-coloring of $(K_4,-)$ each color appears on at most on two vertices. But then it follows from a basic counting that each color appears exactly twice. Hence, the colors that appear on $u_T$ appear only once on $T$ and the others appear twice on $T$. 
		
		Thus we may assume the property is validated for any facial negative triangle, and use it to enforce the property on facial positive triangles. To this end, we consider a positive facial triangle and we assume it is switched so that all edges are positive. Then we complete this face as in Figure~\ref{fig:gadget1}, and assume each negative facial triangle admits the triangle property. We now observe that if a color $c$ appears in all three of the $u_i$, then it can appear in none of the $u_j'$ which is not possible.  
	\end{proof}

	\begin{figure}[ht]
		\centering
		\captionsetup{justification=centering}
		\begin{tikzpicture}[scale=.3, rotate=-15]
			
			\definecolor{custom_blue}{rgb}{0.02, 0.46, 1}
			
			\tikzstyle{none}=[inner sep=0mm]
			\tikzstyle{small black empty}=[fill=white, draw=black, shape=circle, scale=.7pt]
			
			\tikzstyle{edge red}=[-, fill=none, draw=red]
			\tikzstyle{blue_edge}=[-,dashed, draw=custom_blue]
			
			\draw[rotate=60] (9,9)  node[small black empty] (x){$u_1$};
			\draw[rotate=180] (9,9)  node[small black empty] (y){$u_2$};
			\draw[rotate=300] (9,9)  node[small black empty] (z){$u_3$};
			\draw[rotate=00] (2,2)  node[small black empty] (u){$u_2'$};
			\draw[rotate=120] (2,2) node[small black empty] (v){$u_3'$};
			\draw[rotate=240] (2,2) node[small black empty] (w){$u_1'$};

			\draw [blue_edge] (x) -- (u);
			\draw [blue_edge] (y) -- (v);
			\draw [blue_edge] (z) -- (w);
			
			\draw [edge red] (v) -- (u);
			\draw [edge red] (u) -- (w);
			\draw [edge red] (w) -- (v);
			
			\draw [blue_edge] (x) -- (y);
			\draw [blue_edge] (y) -- (z);
			\draw [blue_edge] (z) -- (x);
			
			\draw [edge red] (x) -- (v);
			\draw [edge red] (y) -- (w);
			\draw [edge red] (z) -- (u);
			
		\end{tikzpicture}
		\caption{\small Completing the positive triangle}
		\label{fig:gadget1}
		
	\end{figure}
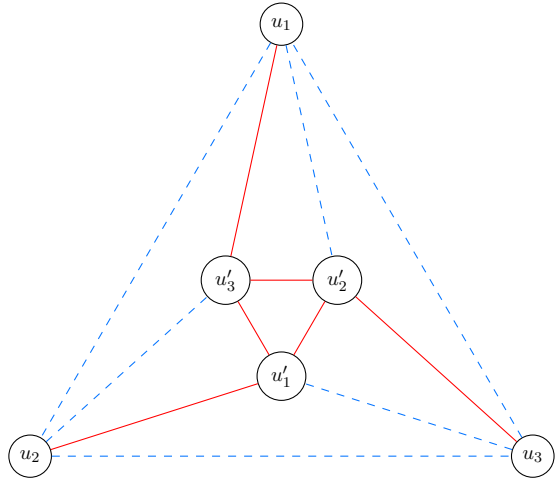

	Assuming every facial triangle must satisfy the triangle property next we build the key gadget.
	
	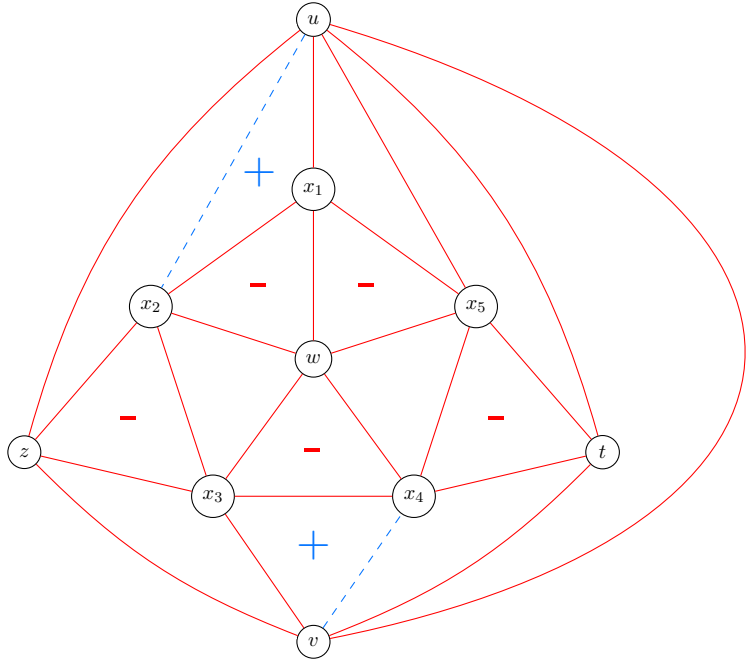
\begin{figure}[ht]
		\centering
		\captionsetup{justification=centering}
		\begin{tikzpicture}[scale=.25, rotate=0]
			
			\definecolor{custom_blue}{rgb}{0.02, 0.46, 1}
			
			\tikzstyle{none}=[inner sep=0mm]
			\tikzstyle{small black empty}=[fill=white, draw=black, shape=circle, scale=.7pt]
			
			\tikzstyle{edge red}=[-, fill=none, draw=red]
			\tikzstyle{blue_edge}=[-,dashed, draw=custom_blue]
			
			\node[small black empty] (0) at (0,0) (w){$w$};
			\draw[rotate=0] (0,9)  node[small black empty] (x1){$x_1$};	
			\draw[rotate=72] (0,9)  node[small black empty] (x2){$x_2$};
			\draw[rotate=144] (0,9)  node[small black empty] (x3){$x_3$};
			\draw[rotate=216] (0,9)  node[small black empty] (x4){$x_4$};
			\draw[rotate=288] (0,9) node[small black empty] (x5){$x_5$};
			\draw[rotate=108] (0,16)  node[small black empty] (z){$z$};
			\draw[rotate=252] (0,16)  node[small black empty] (t){$t$};
			\draw[rotate=0] (0,18)  node[small black empty] (u){$u$};
			\draw[rotate=180] (0,15)  node[small black empty] (v){$v$};

			\node[red, minimum size=8mm, scale=2] at (barycentric cs:z=1,x2=1,x3=1) {-};
			\node[red, minimum size=8mm, scale=2] at (barycentric cs:t=1,x4=1,x5=1) {-};
			\node[red, minimum size=8mm, scale=2] at (barycentric cs:w=1,x1=1,x2=1) {-};
			\node[red, minimum size=8mm, scale=2] at (barycentric cs:w=1,x1=1,x5=1) {-};
			\node[red, minimum size=8mm, scale=2] at (barycentric cs:w=1,x3=1,x4=1) {-};
			
			\node[custom_blue, minimum size=12mm, scale=1.5] at (barycentric cs:u=1,x1=1,x2=1) {+};
			\node[custom_blue, minimum size=12mm, scale=1.5] at (barycentric cs:v=1,x3=1,x4=1) {+};

			\draw [ edge red] (w) -- (x1);
			\draw [ edge red] (w) -- (x2);
			\draw [ edge red] (w) -- (x3);
			\draw [ edge red] (w) -- (x4);
			\draw [ edge red] (w) -- (x5);	
			
			\draw [ edge red] (x5) -- (x1);
			\draw [ edge red] (x1) -- (x2);
			\draw [ edge red] (x2) -- (x3);
			\draw [ edge red] (x3) -- (x4);
			\draw [ edge red] (x4) -- (x5);	
			
			\draw [ edge red] (z) -- (x2);
			\draw [ edge red] (z) -- (x3);
			
			\draw [ edge red] (t) -- (x4);
			\draw [ edge red] (t) -- (x5);
			
			\draw [bend left=12,  edge red] (v) to (z);
			\draw [bend right=12,  edge red] (v) to (t);
			
			\draw [ edge red] (v) -- (x3);
			\draw [blue_edge] (v) -- (x4);

			\draw [edge red ] (u) -- (x1);
			\draw [blue_edge] (u) -- (x2);
			\draw [ edge red] (u) -- (x5);
			\draw [bend left=18,   edge red] (u) to (t);
			\draw [bend right=18,  edge red] (u) to (z);	
			\draw [ edge red] (u) .. controls (30,9) and (30,-9) .. (v);
			\draw [ white] (u) .. controls (-30,9) and (-30,-9) .. (v);
			
		\end{tikzpicture}
		\caption{The signed graph $\widehat{W}$}
		\label{fig:biggadget1}
	\end{figure}
	
	\begin{figure}[ht]
		\centering
		\captionsetup{justification=centering}
		\begin{tikzpicture}[scale=.25, rotate=0]
			
			\definecolor{custom_blue}{rgb}{0.02, 0.46, 1}
			
			\tikzstyle{none}=[inner sep=0mm]
			\tikzstyle{small black empty}=[fill=white, draw=black, shape=circle, scale=.7pt]
			
			\tikzstyle{edge red}=[-, fill=none, draw=red]
			\tikzstyle{blue_edge}=[-,dashed, draw=custom_blue]
			
			\node[small black empty] (0) at (0,0) (w){$w$};
			\draw[rotate=0] (0,9)  node[small black empty] (x1){$x_1$};	
			\draw[rotate=72] (0,9)  node[small black empty] (x2){$x_2$};
			\draw[rotate=144] (0,9)  node[small black empty] (x3){$x_3$};
			\draw[rotate=216] (0,9)  node[small black empty] (x4){$x_4$};
			\draw[rotate=288] (0,9) node[small black empty] (x5){$x_5$};
			\draw[rotate=108] (0,16)  node[small black empty] (z){$z$};
			\draw[rotate=252] (0,16)  node[small black empty] (t){$t$};
			\draw[rotate=0] (0,18)  node[small black empty] (u){$u$};
			\draw[rotate=180] (0,15)  node[small black empty] (v){$v$};
			
			\node[small black empty,scale=.5pt] at (barycentric cs:u=0.13,x1=0.5,x2=0.43) (a1) {}; 
			\node[small black empty,scale=.5pt] at (barycentric cs:u=0.43,x1=0.2,x2=0.43) (a2) {}; 
			\node[small black empty,scale=.5pt] at (barycentric cs:u=0.43,x1=0.5,x2=0.13) (a3) {}; 
			
			\node[small black empty,scale=.5pt] at (barycentric cs:v=0.18,x3=0.43,x4=0.43) (b1) {}; 
			\node[small black empty,scale=.5pt] at (barycentric cs:v=0.43,x3=0.18,x4=0.43) (b2) {}; 
			\node[small black empty,scale=.5pt] at (barycentric cs:v=0.43,x3=0.43,x4=0.18) (b3) {}; 
			
			\node[red, minimum size=8mm, scale=2] at (barycentric cs:z=1,x2=1,x3=1) {-};
			\node[red, minimum size=8mm, scale=2] at (barycentric cs:t=1,x4=1,x5=1) {-};
			\node[red, minimum size=8mm, scale=2] at (barycentric cs:w=1,x1=1,x2=1) {-};
			\node[red, minimum size=8mm, scale=2] at (barycentric cs:w=1,x1=1,x5=1) {-};
			\node[red, minimum size=8mm, scale=2] at (barycentric cs:w=1,x3=1,x4=1) {-};
			\node[red, minimum size=8mm, scale=1.5] at (barycentric cs:a1=1,a2=1,a3=1) {-};
			\node[red, minimum size=8mm, scale=1.5] at (barycentric cs:b1=1,b2=1,b3=1) {-};
			

			\draw [ edge red] (w) -- (x1);
			\draw [ edge red] (w) -- (x2);
			\draw [ edge red] (w) -- (x3);
			\draw [ edge red] (w) -- (x4);
			\draw [ edge red] (w) -- (x5);	
			
			\draw [ edge red] (x5) -- (x1);
			\draw [ edge red] (x1) -- (x2);
			\draw [ edge red] (x2) -- (x3);
			\draw [ edge red] (x3) -- (x4);
			\draw [ edge red] (x4) -- (x5);	
			
			\draw [ edge red] (z) -- (x2);
			\draw [ edge red] (z) -- (x3);
			
			\draw [ edge red] (t) -- (x4);
			\draw [ edge red] (t) -- (x5);
			
			\draw [bend left=12,  edge red] (v) to (z);
			\draw [bend right=12,  edge red] (v) to (t);
			
			\draw [ edge red] (v) -- (x3);
			\draw [blue_edge] (v) -- (x4);
			
			\draw[blue_edge] (u)--(a3);
			\draw[edge red] (u)--(a2);
			\draw[edge red] (x1)--(a1);
			\draw[blue_edge] (x1)--(a3);
			\draw[edge red] (x2)--(a1);
			\draw[blue_edge] (x2)--(a2);
			\draw[edge red] (a1)--(a2)--(a3)--(a1);
			
			\draw[blue_edge]    (v)--(b3);
			\draw[edge red]     (v)--(b2);
			\draw[edge red]     (x3)--(b1);
			\draw[blue_edge]    (x3)--(b3);
			\draw[edge red]     (x4)--(b1);
			\draw[blue_edge]    (x4)--(b2);
			\draw[edge red] (b1)--(b2)--(b3)--(b1);
			
			\draw [edge red ] (u) -- (x1);
			\draw [blue_edge] (u) -- (x2);
			\draw [ edge red] (u) -- (x5);
			\draw [bend left=18,   edge red] (u) to (t);
			\draw [bend right=18,  edge red] (u) to (z);	
			\draw [ edge red] (u) .. controls (30,9) and (30,-9) .. (v);
			\draw [ white] (u) .. controls (-30,9) and (-30,-9) .. (v);
			
		\end{tikzpicture}
		\caption{The signed graph $\widehat{W}'$}
		\label{fig:biggadget2}
	\end{figure}

	\begin{lemma}\label{lem:mainGadget}
		In every balanced $(2k,k)$-coloring of the signed graph $\widehat{W}$ of Figure~\ref{fig:biggadget1} satisfying the triangle property for every facial triangle, $u$ and $v$ have no common color. 
	\end{lemma}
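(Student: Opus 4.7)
The strategy is a proof by contradiction. Suppose there exists a color $c \in A(u)\cap A(v)$, where $A(x)$ denotes the set of $k$ colors assigned to $x$ and $A^{-1}(c) := \{x : c\in A(x)\}$. I will force enough structure on $A^{-1}(c)$ to exhaust the situation in four subcases, each leading to a contradiction.

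First, the triangles $uvz$ and $uvt$ each consist of three negative edges and hence are negative cycles; since the $c$-class must be balanced, $z, t \notin A^{-1}(c)$. Applying the triangle property to the facial negative triangles $zx_2x_3$ and $tx_4x_5$ now forces at least one vertex in each of $\{x_2, x_3\}$ and $\{x_4, x_5\}$ to lie in $A^{-1}(c)$. Conversely, the $4$-cycles $u$-$v$-$x_3$-$x_2$ and $u$-$v$-$x_4$-$x_5$ are negative (sign products $(-)(-)(-)(+)$ and $(-)(+)(-)(-)$), so these pairs cannot both lie entirely in $A^{-1}(c)$. Hence exactly one of $\{x_2, x_3\}$ and exactly one of $\{x_4, x_5\}$ lies in $A^{-1}(c)$, producing four subcases.

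The bulk of the work is to rule out each subcase using the triangle property on the inner facial triangles $wx_ix_{i+1}$ (which, if $w\notin A^{-1}(c)$, forces $A^{-1}(c)\cap\{x_1,\dots,x_5\}$ to be a vertex cover of the pentagonal $x$-cycle), on the positive facial triangles $ux_1x_2$ and $vx_3x_4$, together with detecting a short monochromatic negative cycle. The cases $\{x_3, x_4\}$ and $\{x_2, x_4\}$ terminate in a violation of the positive-triangle property: the first immediately on $vx_3x_4$; the second after using the negative $5$-cycle $u$-$x_2$-$w$-$x_4$-$v$ (sign $-$) to force $w\notin A^{-1}(c)$, then using the triangle property on $wx_5x_1$ to force $x_1\in A^{-1}(c)$, finally violating $ux_1x_2$. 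The cases $\{x_2, x_5\}$ and $\{x_3, x_5\}$ instead terminate in a monochromatic negative cycle: in $\{x_2, x_5\}$, the triangle property on $wx_3x_4$ forces $w\in A^{-1}(c)$ and $u, x_2, w, x_5$ induces a negative $4$-cycle of sign $(+)(-)(-)(-)$; in $\{x_3, x_5\}$, the all-negative $5$-cycle $u$-$x_5$-$w$-$x_3$-$v$ forces $w\notin A^{-1}(c)$, then $wx_1x_2$ forces $x_1\in A^{-1}(c)$, and the triangle $u$-$x_1$-$x_5$ (three negative edges) is a monochromatic negative cycle.

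The main obstacle is the bookkeeping of the final step: for each subcase one must identify the correct long cycle through $w$ that pins down whether $w\in A^{-1}(c)$, the correct wheel triangle to force a specific $x_i$, and finally the resulting short negative cycle or positive-triangle violation. The gadget has an approximate vertical symmetry swapping $x_2\leftrightarrow x_5$, $x_3\leftrightarrow x_4$, $z\leftrightarrow t$ while fixing $u$ and $v$, but this symmetry does not collapse the four subcases into fewer, so each one must be checked on its own.
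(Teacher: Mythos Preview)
Your proof is correct and uses the same seven facial triangles and the same negative cycles as the paper's argument; the only difference is organizational. The paper first establishes $x_1\notin A^{-1}(c)$ and $w\in A^{-1}(c)$ before reducing to two final cases ($\{x_2,x_4\}$ or $\{x_3,x_5\}$ in the color class), whereas you split immediately into four cases on which of $\{x_2,x_3\}$ and $\{x_4,x_5\}$ carries the color and determine $w$ and $x_1$ inside each branch.
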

	
	\begin{proof}
		Toward a contradiction, assume $\phi$ is such a coloring and that $1\in \phi(u)\cap \phi(v)$. We partition vertices into two parts, part $A$ consisting of vertices that have 1 in their color set, and $\bar{A}$, vertices that do not have the color 1. Observe that $A$ must be a balanced set, and since $uvz$ and $uvt$ are negative triangles, we have $z, t \in \bar{A}$.
		
		We claim that $x_1 \in \bar{A}$. Otherwise, $x_1 \in A$ and first of all by the triangle property on $ux_1x_2$ we have $x_2 \in \bar{A}$ and, secondly,  since $ux_1x_5$ is a negative cycle, we have $x_5 \in \bar{A}$. But then by the triangle property on $zx_2x_3$ and $tx_4x_5$ we have $x_3, x_4 \in A$, however then the triangle $vx_3x_4$ fails the triangle property.
		
		Next we claim that $w\in A$. Otherwise, because of the triangle property on negative triangles $x_1x_2w$ and $x_5x_1w$ we have $x_2, x_5 \in A$. Observe that each of $ux_2x_3v$ and $ux_5x_4v$ induces a negative $4$-cycle, thus we have $x_3, x_4 \in \bar{A}$. But then the triangle $x_3x_4w$ fails the triangle property. 
		
		For the four vertices $x_2,x_3, x_4,x_5$ we then have the following conditions. Of $x_2,x_5$ at least one is in $\bar{A}$ because of the negative 4-cycle $ux_2wx_5$. Of $x_3,x_4$, because of the triangle property on $vx_3x_4$, at least one is in $\bar{A}$. Of $x_2,x_3$ one has to be in $A$ because of the triangle property on $zx_2x_3$ and similarly, because of the triangle $tx_4x_5$ at least one of $x_4, x_5$ should be in $A$. That leaves us with two possibilities: 
		\begin{itemize}
			\item[i.] $x_2, x_4 \in A$ and $x_3, x_5 \in \bar{A}$ 
			\item[ii.] $x_2, x_4 \in \bar{A}$ and $x_3, x_5 \in A$.
		\end{itemize} 
		In case $i.$ $A$ induces negative 5-cycle $ux_2wx_4v$ and in case $ii.$ $A$ induces negative 5-cycle $ux_5wx_3v$. This contradicts the fact that $A$ is a balanced set, thus proving the claim.  		
	\end{proof}
	
	\begin{remark}
		To prove the lemma \ref{lem:mainGadget} we have used the triangle property on the following seven triangles: $ux_1x_2$,$vx_3x_4$, $wx_1x_2$, $wx_1x_5$, $wx_3x_4$, $zx_2x_3$ and $tx_4x_5$, where the first two are positive and five others are negative.
	\end{remark}
	
	\begin{theorem}
		There exists a planar signed simple graph whose fractional balanced chromatic number is strictly larger than $2$. 
	\end{theorem}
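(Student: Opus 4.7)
The plan is to construct the desired counterexample by gluing three copies of the signed gadget $\widehat{W}$ of Figure~\ref{fig:biggadget1} (extended via Lemma~\ref{lem:TriangleProperty} so that every facial triangle satisfies the triangle property, i.e.\ essentially using $\widehat{W}'$ of Figure~\ref{fig:biggadget2}) around a common triangle. Concretely, I introduce three ``hub'' vertices $a,b,c$, and for each of the three pairs $(a,b),(b,c),(c,a)$ I insert one fresh copy of $\widehat{W}'$, identifying the distinguished vertices $u,v$ of that copy with the pair in question.

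First I would verify that the result is a planar signed simple graph. Planarity follows from the fact that in Figure~\ref{fig:biggadget1} both $u$ and $v$, together with the edge $uv$, lie on the outer face of $\widehat{W}$; hence each of the three copies of $\widehat{W}'$ can be embedded in its own region of the plane delimited by the triangle $abc$. The graph is simple because the three copies are internally vertex-disjoint, share only the hubs $a,b,c$, and each pair of hubs is joined by exactly one $uv$-edge, so no parallel edges arise. If the combined embedding creates any facial triangle that is not already inside some copy (the only new one being the ``outer'' triangle $abc$ itself), I would invoke Lemma~\ref{lem:TriangleProperty} one final time on the combined graph to ensure that every facial triangle of the resulting signed plane graph $(G,\sigma)$ has the triangle property.

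The core of the argument is then a short counting step: assume for contradiction that $(G,\sigma)$ admits a balanced $(2k,k)$-coloring $\phi$ for some integer $k\geq 1$. Applying Lemma~\ref{lem:mainGadget} to each of the three copies of $\widehat{W}'$ yields
\[
\phi(a)\cap\phi(b)=\phi(b)\cap\phi(c)=\phi(a)\cap\phi(c)=\emptyset.
\]
Since each of these sets has exactly $k$ colors, their pairwise disjoint union would contain $3k$ colors, contradicting the fact that only $2k$ colors are available. Hence no such $(2k,k)$-coloring can exist for any $k\geq 1$, so $\chi_{fb}(G,\sigma)>2$. The most delicate point I anticipate is making the planar arrangement of the three gadgets watertight: one must confirm that $\{u,v\}$ can be taken to lie on the outer face of $\widehat{W}'$ so that three copies can genuinely be drawn around the triangle $abc$ without crossings, and that the added vertices coming from the final application of Lemma~\ref{lem:TriangleProperty} do not inadvertently create new negative cycles interfering with Lemma~\ref{lem:mainGadget}.
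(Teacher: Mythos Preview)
Your proposal is correct and follows essentially the same approach as the paper: glue three copies of the fully extended gadget (the paper calls it $\widehat{W}''$, obtained from $\widehat{W}'$ by adding a $(K_4,-)$-vertex inside each of the seven marked negative facial triangles) around a triangle, then apply Lemma~\ref{lem:mainGadget} to force the three hub color-sets to be pairwise disjoint, giving the $3k>2k$ contradiction. Your final invocation of Lemma~\ref{lem:TriangleProperty} on the outer triangle $abc$ is unnecessary (Lemma~\ref{lem:mainGadget} only uses the triangle property on the seven internal triangles) but harmless.
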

	
	\begin{proof}
		Using the mini-gadget of \Cref{fig:gadget1}, we complete the two specified positive triangles of the gadget $\widehat{W}$ of \Cref{fig:biggadget1} to get the gadget $\widehat{W}'$ of \Cref{fig:biggadget2}. For the seven specified negative triangles of $\widehat{W}'$ to satisfy the triangle property, we add a vertex inside each of those faces completing them to have $(K_4, -)$ around each of the added vertices. This results in our final gadget $\widehat{W}''$. To build a final construction $\widehat{G}$, we take a triangle $u_1,u_2,u_3$,   and between each pair of vertices, we put a copy of $\widehat{W}''$ (see \cref{fig:K3+Gadget}, left).
		By the construction $\widehat{G}$ is a simple graph. We have $\chi_{fb}(\widehat{G})>2$ because otherwise it admits a $(2k,k)$-coloring for some $k$ but then, by Lemma~\ref{lem:mainGadget}, we must use disjoint sets of colors for each $u_i$, $u_j$. However, that requires $3k$ colors. 
	\end{proof}
	
	\begin{remark}
		Since we use three copies of the gadget and identify three pairs of vertices, the final graph, which is depicted on the left of Figure~\ref{fig:K3+Gadget}, has $66$ vertices, by identifying the 3 copies of $z$ in the central $6$-face, we get an example on $64$ vertices. 
		Extending the construction on $K_4$, which is depicted in Figure~\ref{fig:K3+Gadget} in two pieces, we will have a graph on $130$ vertices whose fractional balanced chromatic number is calculated in the next section to be $2+\frac{2}{85}$, similarly, by identifying copies of $z$, we may reduce the number of vertices to $127$.    
	\end{remark}

	\begin{figure}[ht]
		\begin{minipage}{.48\textwidth}
			\hspace{-11.cm}
			\resizebox{!}{10cm}{
				\usetikzlibrary{backgrounds}
\usetikzlibrary{arrows}
\usetikzlibrary{shapes,shapes.geometric,shapes.misc}

\pgfdeclarelayer{edgelayer}
\pgfdeclarelayer{nodelayer}
\pgfsetlayers{background,edgelayer,nodelayer,main}

\tikzstyle{none}=[inner sep=0mm]
\tikzstyle{small black empty}=[fill=white, draw=black, shape=circle, scale=.5pt]

\tikzstyle{edge red}=[-, fill=none, draw=red]
\tikzstyle{blue_edge}=[-, draw={rgb,255: red,6; green,118; blue,255}]

\begin{tikzpicture}[scale = .3]
	\foreach \angle/\index in {1/3, 2/1, 3/2} {
		\begin{scope}[rotate=\angle*120-90,shift={(6.92820,-12)}]
			
			\begin{pgfonlayer}{nodelayer}
				\node [style=small black empty] (0) at (0, 24) {$u_{\angle}$};
				\node [style=small black empty] (1) at (1.25, 15.5) {};
				\node [style=small black empty] (2) at (-3.25, 10.5) {};
				\node [style=small black empty] (3) at (-2.25, 13) {};
				\node [style=small black empty] (4) at (-1.25, 9) {};
				\node [style=small black empty] (5) at (3.75, 9) {};
				\node [style=small black empty] (6) at (4.75, 13) {};
				\node [style=small black empty] (7) at (6.75, 11.25) {};
				\node [style=small black empty] (8) at (0, 0) {$u_{\index}$};
				\node [style=small black empty] (9) at (1.25, 11.5) {};
				\node [style=none] (10) at (-0.5, 17.75) {};
				\node [style=none] (11) at (0, 16.25) {};
				\node [style=none] (12) at (0.75, 18) {};
				\node [style=none] (13) at (-0.75, 18.25) {};
				\node [style=none] (14) at (0, 15.5) {};
				\node [style=none] (15) at (1, 18.75) {};
				\node [style=none] (16) at (0, 17.5) {};
				\node [style=none] (17) at (1, 7) {};
				\node [style=none] (18) at (0.5, 6.25) {};
				\node [style=none] (19) at (1.5, 6.25) {};
				\node [style=none] (20) at (1, 8) {};
				\node [style=none] (21) at (-0.25, 6) {};
				\node [style=none] (22) at (2, 6) {};
				\node [style=none] (23) at (1, 6.5) {};
				\node [style=none] (24) at (0.25, 13) {};
				\node [style=none] (25) at (2.5, 13) {};
				\node [style=none] (26) at (-2.25, 10.75) {};
				\node [style=none] (27) at (1.25, 10) {};
				\node [style=none] (28) at (5.25, 11.25) {};
			\end{pgfonlayer}
			\begin{pgfonlayer}{edgelayer}
				\draw [style=edge red, bend right=15] (0) to (2);
				\draw [style=edge red, bend left=15] (0) to (7);
				\draw [style=edge red, bend left=15] (0) to (1);
				\draw [style=edge red] (0) to (6);
				\draw [style=edge red, bend right=15] (2) to (8);
				\draw [style=edge red] (4) to (8);
				\draw [style=edge red, bend right=15] (8) to (7);
				\draw [style=edge red] (1) to (3);
				\draw [style=edge red] (3) to (4);
				\draw [style=edge red] (4) to (5);
				\draw [style=edge red] (5) to (6);
				\draw [style=edge red] (6) to (1);
				\draw [style=edge red] (1) to (9);
				\draw [style=edge red] (9) to (4);
				\draw [style=edge red] (9) to (3);
				\draw [style=edge red] (3) to (2);
				\draw [style=edge red] (2) to (4);
				\draw [style=edge red] (9) to (5);
				\draw [style=edge red] (5) to (7);
				\draw [style=edge red] (7) to (6);
				\draw [style=edge red] (6) to (9);
				\draw [style={blue_edge}] (5) to (8);
				\draw [style={blue_edge}] (0) to (3);
				\draw [style={blue_edge}] (3) to (10.center);
				\draw [style=edge red] (10.center) to (0);
				\draw [style={blue_edge}] (0) to (12.center);
				\draw [style={blue_edge}] (12.center) to (1);
				\draw [style=edge red] (1) to (11.center);
				\draw [style=edge red] (11.center) to (3);
				\draw [style=edge red] (11.center) to (10.center);
				\draw [style=edge red] (10.center) to (12.center);
				\draw [style=edge red] (12.center) to (11.center);
				\draw [style=edge red] (3) to (14.center);
				\draw [style=edge red] (14.center) to (11.center);
				\draw [style=edge red] (14.center) to (1);
				\draw [style=edge red] (13.center) to (10.center);
				\draw [style={blue_edge}] (13.center) to (3);
				\draw [style=edge red] (13.center) to (0);
				\draw [style={blue_edge}] (0) to (15.center);
				\draw [style=edge red] (15.center) to (12.center);
				\draw [style={blue_edge}] (15.center) to (1);
				\draw [style=edge red] (10.center) to (16.center);
				\draw [style=edge red] (16.center) to (11.center);
				\draw [style=edge red] (16.center) to (12.center);
				\draw [style={blue_edge}] (4) to (17.center);
				\draw [style=edge red] (4) to (18.center);
				\draw [style=edge red] (18.center) to (8);
				\draw [style={blue_edge}] (8) to (19.center);
				\draw [style=edge red] (19.center) to (18.center);
				\draw [style=edge red] (18.center) to (17.center);
				\draw [style=edge red] (17.center) to (19.center);
				\draw [style=edge red] (19.center) to (5);
				\draw [style={blue_edge}] (5) to (17.center);
				\draw [style=edge red] (17.center) to (23.center);
				\draw [style=edge red] (23.center) to (18.center);
				\draw [style=edge red] (23.center) to (19.center);
				\draw [style=edge red] (22.center) to (19.center);
				\draw [style={blue_edge}] (22.center) to (8);
				\draw [style=edge red] (22.center) to (5);
				\draw [style={blue_edge}] (5) to (20.center);
				\draw [style=edge red] (20.center) to (17.center);
				\draw [style={blue_edge}] (20.center) to (4);
				\draw [style=edge red] (4) to (21.center);
				\draw [style=edge red] (21.center) to (18.center);
				\draw [style=edge red] (21.center) to (8);
				\draw [style=edge red] (3) to (26.center);
				\draw [style=edge red] (26.center) to (2);
				\draw [style=edge red] (26.center) to (4);
				\draw [style=edge red] (27.center) to (4);
				\draw [style=edge red] (27.center) to (9);
				\draw [style=edge red] (27.center) to (5);
				\draw [style=edge red] (5) to (28.center);
				\draw [style=edge red] (28.center) to (6);
				\draw [style=edge red] (28.center) to (7);
				\draw [style=edge red] (6) to (25.center);
				\draw [style=edge red] (25.center) to (9);
				\draw [style=edge red] (25.center) to (1);
				\draw [style=edge red] (1) to (24.center);
				\draw [style=edge red] (24.center) to (3);
				\draw [style=edge red] (24.center) to (9);
				\draw [style=edge red, bend right=45, looseness=1.50] (8) to (0);
			\end{pgfonlayer}
	\end{scope}}
\end{tikzpicture}
			}
		\end{minipage}
		\begin{minipage}{.4\textwidth}
			\centering
			\vspace{-2cm}
			\hspace{-2.5cm}
			\resizebox{!}{7cm}{
				\usetikzlibrary{backgrounds}
\usetikzlibrary{arrows}
\usetikzlibrary{shapes,shapes.geometric,shapes.misc}

\pgfdeclarelayer{edgelayer}
\pgfdeclarelayer{nodelayer}
\pgfsetlayers{background,edgelayer,nodelayer,main}

\tikzstyle{none}=[inner sep=0mm]
\tikzstyle{small black empty}=[fill=white, draw=black, shape=circle, scale=.5pt]

\tikzstyle{edge red}=[-, fill=none, draw=red]
\tikzstyle{blue_edge}=[-, draw={rgb,255: red,6; green,118; blue,255}]

\begin{tikzpicture}[scale = .2]
	\foreach \angle in {1, 2, 3} {
		\begin{scope}[rotate=\angle*120-120]
			\begin{pgfonlayer}{nodelayer}
				\node [style=small black empty] (0) at (0, 25) {$u_{\angle}$};
				\node [style=small black empty] (1) at (0, 15.5) {};
				\node [style=small black empty] (2) at (-5.25, 11) {};
				\node [style=small black empty] (3) at (-3.5, 13) {};
				\node [style=small black empty] (4) at (-2.75, 9) {};
				\node [style=small black empty] (5) at (2.75, 9) {};
				\node [style=small black empty] (6) at (3.5, 13) {};
				\node [style=small black empty] (7) at (5.5, 11) {};
				\node [style=small black empty] (8) at (0, 0) {};
				\node [style=small black empty] (9) at (0, 11.5) {};
				\node [style=none] (10) at (-2, 18) {};
				\node [style=none] (11) at (-1.25, 16.25) {};
				\node [style=none] (12) at (-0.5, 18) {};
				\node [style=none] (13) at (-2.5, 18.25) {};
				\node [style=none] (14) at (-1.25, 15.5) {};
				\node [style=none] (15) at (0, 19) {};
				\node [style=none] (16) at (-1.25, 17.5) {};
				\node [style=none] (17) at (0, 7.25) {};
				\node [style=none] (18) at (-0.75, 6) {};
				\node [style=none] (19) at (0.75, 6) {};
				\node [style=none] (20) at (0, 8.25) {};
				\node [style=none] (21) at (-1.5, 5.75) {};
				\node [style=none] (22) at (1.5, 5.75) {};
				\node [style=none] (23) at (0, 6.5) {};
				\node [style=none] (24) at (-1, 13) {};
				\node [style=none] (25) at (1.25, 13) {};
				\node [style=none] (26) at (-4, 11) {};
				\node [style=none] (27) at (0, 10) {};
				\node [style=none] (28) at (4, 11) {};
			\end{pgfonlayer}
			\begin{pgfonlayer}{edgelayer}
				\draw [style=edge red, bend right=15] (0) to (2);
				\draw [style=edge red, bend left=15] (0) to (7);
				\draw [style=edge red, bend left=15] (0) to (1);
				\draw [style=edge red] (0) to (6);
				\draw [style=edge red, bend right=15] (2) to (8);
				\draw [style=edge red, bend right=15] (4) to (8);
				\draw [style=edge red, bend right=15] (8) to (7);
				\draw [style=edge red] (1) to (3);
				\draw [style=edge red] (3) to (4);
				\draw [style=edge red] (4) to (5);
				\draw [style=edge red] (5) to (6);
				\draw [style=edge red] (6) to (1);
				\draw [style=edge red] (1) to (9);
				\draw [style=edge red] (9) to (4);
				\draw [style=edge red] (9) to (3);
				\draw [style=edge red] (3) to (2);
				\draw [style=edge red] (2) to (4);
				\draw [style=edge red] (9) to (5);
				\draw [style=edge red] (5) to (7);
				\draw [style=edge red] (7) to (6);
				\draw [style=edge red] (6) to (9);
				\draw [style={blue_edge}, bend left=15] (5) to (8);
				\draw [style={blue_edge}, bend right=15] (0) to (3);
				\draw [style={blue_edge}] (3) to (10.center);
				\draw [style=edge red] (10.center) to (0);
				\draw [style={blue_edge}] (0) to (12.center);
				\draw [style={blue_edge}] (12.center) to (1);
				\draw [style=edge red] (1) to (11.center);
				\draw [style=edge red] (11.center) to (3);
				\draw [style=edge red] (11.center) to (10.center);
				\draw [style=edge red] (10.center) to (12.center);
				\draw [style=edge red] (12.center) to (11.center);
				\draw [style=edge red] (3) to (14.center);
				\draw [style=edge red] (14.center) to (11.center);
				\draw [style=edge red] (14.center) to (1);
				\draw [style=edge red] (13.center) to (10.center);
				\draw [style={blue_edge}] (13.center) to (3);
				\draw [style=edge red] (13.center) to (0);
				\draw [style={blue_edge}] (0) to (15.center);
				\draw [style=edge red] (15.center) to (12.center);
				\draw [style={blue_edge}] (15.center) to (1);
				\draw [style=edge red] (10.center) to (16.center);
				\draw [style=edge red] (16.center) to (11.center);
				\draw [style=edge red] (16.center) to (12.center);
				\draw [style={blue_edge}] (4) to (17.center);
				\draw [style=edge red] (4) to (18.center);
				\draw [style=edge red] (18.center) to (8);
				\draw [style={blue_edge}] (8) to (19.center);
				\draw [style=edge red] (19.center) to (18.center);
				\draw [style=edge red] (18.center) to (17.center);
				\draw [style=edge red] (17.center) to (19.center);
				\draw [style=edge red] (19.center) to (5);
				\draw [style={blue_edge}] (5) to (17.center);
				\draw [style=edge red] (17.center) to (23.center);
				\draw [style=edge red] (23.center) to (18.center);
				\draw [style=edge red] (23.center) to (19.center);
				\draw [style=edge red] (22.center) to (19.center);
				\draw [style={blue_edge}] (22.center) to (8);
				\draw [style=edge red] (22.center) to (5);
				\draw [style={blue_edge}] (5) to (20.center);
				\draw [style=edge red] (20.center) to (17.center);
				\draw [style={blue_edge}] (20.center) to (4);
				\draw [style=edge red] (4) to (21.center);
				\draw [style=edge red] (21.center) to (18.center);
				\draw [style=edge red] (21.center) to (8);
				\draw [style=edge red] (3) to (26.center);
				\draw [style=edge red] (26.center) to (2);
				\draw [style=edge red] (26.center) to (4);
				\draw [style=edge red] (27.center) to (4);
				\draw [style=edge red] (27.center) to (9);
				\draw [style=edge red] (27.center) to (5);
				\draw [style=edge red] (5) to (28.center);
				\draw [style=edge red] (28.center) to (6);
				\draw [style=edge red] (28.center) to (7);
				\draw [style=edge red] (6) to (25.center);
				\draw [style=edge red] (25.center) to (9);
				\draw [style=edge red] (25.center) to (1);
				\draw [style=edge red] (1) to (24.center);
				\draw [style=edge red] (24.center) to (3);
				\draw [style=edge red] (24.center) to (9);
				\draw [style=edge red, bend right=45, looseness=1.25] (8) to (0);
			\end{pgfonlayer}
			
	\end{scope}}
\end{tikzpicture}}
		\end{minipage}
		\captionsetup{justification=centering}
		\caption{Signed planar graphs satisfying $\chi_{fb}>2$ with an extension}
		\label{fig:K3+Gadget}
	\end{figure}
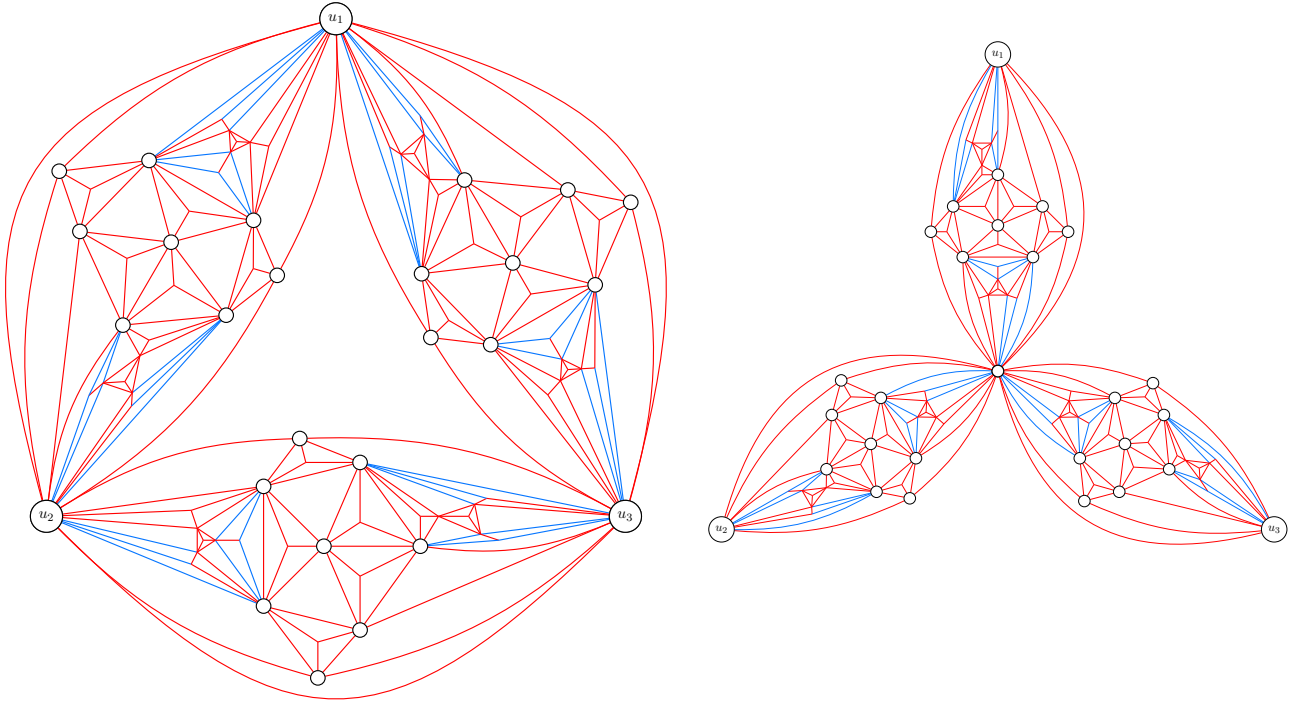

	\section{Improving the lower bound}\label{sec:lowerbound_for_fb}
	
	A better understanding of what make the gadget and the examples work leads to an improved lower bound. To that end we first consider the gadget $\widehat{W}'$ of \Cref{fig:biggadget2}. Observe that there is a sets $\mathcal{T}$ of seven indicated facial negative triangles. The key property of the gadget $\widehat{W}'$ is the following.
	
	\begin{lemma}\label{lem:MissingTriangles}
		Any balanced set of $\widehat{W}'$ containing both $u$ and $v$ misses at least one of the seven triangles of $\mathcal{T}$.    
	\end{lemma}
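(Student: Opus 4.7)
I would prove the lemma by contradiction: suppose $A$ is a balanced set of $\widehat{W}'$ containing both $u$ and $v$ and meeting each of the seven triangles in $\mathcal{T}$, and derive a contradiction by essentially rerunning the proof of Lemma~\ref{lem:mainGadget} with the role of a monochromatic class played by $A$ itself.

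The first step is to record the forced exclusions. Since $uvz$ and $uvt$ are negative triangles and $u,v\in A$, balancedness gives $z,t\notin A$; then from the intersections with $zx_2x_3$ and $tx_4x_5$, at least one of $\{x_2,x_3\}$ and at least one of $\{x_4,x_5\}$ must lie in $A$. The conceptual heart of the argument is to translate the hypotheses ``$A$ intersects $a_1a_2a_3$'' and ``$A$ intersects $b_1b_2b_3$'' into the precise constraints that the triangle property on the positive triangles $ux_1x_2$ and $vx_3x_4$ supplied in Lemma~\ref{lem:mainGadget}. For this I would verify, from the signs depicted in Figure~\ref{fig:biggadget2}, that each of the three triangles $x_1x_2a_1$ (all negative), $ux_2a_2$ (one negative, two positive), and $ux_1a_3$ (one negative, two positive) is negative; hence if $\{u,x_1,x_2\}\subseteq A$ and $A$ is balanced, then $A\cap\{a_1,a_2,a_3\}=\emptyset$. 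Contrapositively, intersecting $a_1a_2a_3$ together with $u\in A$ forces at least one of $x_1,x_2$ out of $A$, and symmetrically the intersection with $b_1b_2b_3$ together with $v\in A$ forces at least one of $x_3,x_4$ out of $A$.

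The remaining steps are then essentially those of Lemma~\ref{lem:mainGadget}. I first show $x_1\notin A$: otherwise the $a$-constraint gives $x_2\notin A$, the negative triangle $ux_1x_5$ gives $x_5\notin A$, whence the intersections with $zx_2x_3$ and $tx_4x_5$ force $x_3,x_4\in A$, contradicting the $b$-constraint. Next I show $w\in A$: if not, the intersections with $wx_1x_2$, $wx_1x_5$, $wx_3x_4$ together with $x_1\notin A$ put $x_2,x_5$ and at least one of $x_3,x_4$ into $A$, but then one of $ux_2x_3v$ or $ux_5x_4v$ is an induced negative $4$-cycle. Finally, with $u,v,w\in A$ in hand, the negative $4$-cycle $ux_2wx_5$ forces one of $x_2,x_5$ out, and the same parity analysis as in Lemma~\ref{lem:mainGadget} leaves only the configurations $\{x_2,x_4\}\subseteq A$, $\{x_3,x_5\}\cap A=\emptyset$ or its swap; each case produces an induced negative $5$-cycle $ux_2wx_4v$ or $ux_5wx_3v$, completing the contradiction.

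The main obstacle I anticipate is not the logical structure, which is a transparent adaptation of the previous lemma, but the bookkeeping around the Wenger-style completions: one must verify the correct signs inside the two gadgets to obtain the reduction from ``$A$ meets the inner triangle'' to ``$A$ avoids one of the two outer $x$-vertices''. Once this reduction is in place, the rest of the argument is a short case analysis identical in spirit to the proof already written.
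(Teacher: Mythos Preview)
Your argument is correct, and the sign checks on the three triangles $x_1x_2a_1$, $ux_2a_2$, $ux_1a_3$ inside the mini-gadget go through exactly as you claim, so the reduction ``$A$ meets $a_1a_2a_3$ and contains $u$ $\Rightarrow$ $\{x_1,x_2\}\not\subseteq A$'' is valid; from there the remainder is indeed a verbatim rerun of Lemma~\ref{lem:mainGadget}.

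However, the paper proves the lemma by a genuinely different route. Rather than arguing by contradiction, it \emph{enumerates} all maximal balanced sets of $\widehat{W}$ containing both $u$ and $v$: two of them ($B_1,B_2$) contain one of the positive triangles $ux_1x_2$ or $vx_3x_4$ and hence miss the corresponding inner triangle of the mini-gadget, while the remaining eight ($B_3,\ldots,B_{10}$) each miss one of the five specified negative facial triangles. Your approach is leaner for the bare statement of the lemma, and it avoids the case analysis needed to certify that the enumeration is exhaustive. The paper's approach, on the other hand, yields strictly more information: the explicit list $B_1,\ldots,B_{10}$ is reused later, both to observe that every induced \emph{forest} containing $u$ and $v$ has order at most~$4$ (Section on $a_f$) and to build the explicit colorings in the tables. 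So the enumeration pays for itself downstream, whereas your contradiction argument would have to be supplemented by a separate enumeration when those later results are reached.
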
 
	
	\begin{proof}
		Observe that if a balanced set $B$ contains all vertices of a positive triangle, then it cannot contain any of the vertices of the negative triangle of the mini-gadget that might have been built inside it.
		Thus we  will work with $\widehat W$ instead and consider two types of balanced sets containing both $u$ and $v$. The first type is the following two, with the property that each contains one of the two specific positive triangles.
		
		\begin{itemize}
			\item[] $B_1:=\{u,v, x_1, x_2,x_4\}$, contains the positive triangle $ux_1x_2$
			\item[] $B_2:=\{u,v, x_1, x_3,x_4\}$, contains the positive triangle $vx_3x_4$
		\end{itemize}
		
		It can be readily verified that adding any vertex except $x_4$ to $\set{u,v,x_1,x_2}$ induces a negative cycle. Thus $B_1$ is indeed the only maximal balanced set containing $\set{u,v,x_1,x_2}$. Similarly $B_2$ is the only maximal balanced set containing $\set{u,v,x_3,x_4}$.
		
		Next we consider maximal balanced sets distinct from $B_1$ and $B_2$. We show that there are eight of them, labeled $B_3,B_4, \ldots, B_{10}$, each missing one the the specified five negative facial triangles. 
		
		\begin{itemize}
			\item[] $B_3:=\{u,v,w, x_1, x_4\}$, misses the negative triangle $zx_2x_3$
			\item[] $B_4:=\{u,v,w, x_2\}$, misses the negative triangle $tx_4x_5$
			\item[] $B_5:=\{u,v,w, x_3\}$, misses the negative triangle $tx_4x_5$
			\item[] $B_6:=\{u,v,w, x_5\}$, misses the negative triangle  $zx_2x_3$

			\item[] $B_7:=\{u,v, x_1,x_3\}$, misses the negative triangle $tx_4x_5$			
			\item[] $B_8:=\{u,v, x_2, x_5\}$, misses the negative triangle $wx_3x_4$
			\item[] $B_{9}:=\{u,v, x_2,x_4\}$, misses the negative triangle $wx_1x_5$
			\item[] $B_{10}:=\{u,v, x_3, x_5\}$, misses the negative triangle $wx_1x_2$
			
		\end{itemize}
		
		We need to verify that $B_3, \ldots, B_{10}$ are the only maximal balanced sets containing both $u$ and $v$ but not containing the positive triangles $ux_1x_2$ or $vx_3x_4$.  Observe that neither $z$ nor $t$ can be in the set because of the negative triangles $zuv$ and $tuv$. If $w$ is in $B$, then of $x_1, x_2, x_3, x_4$, and $x_5$  no consecutive pair can be in the set. Of the nonadjacent pairs, adding $x_2,x_5$ induces the negative the 4-cycle $ux_2wx_5$, adding $x_2,x_4$ induces the negative 5-cycle $ux_2wx_4v$, adding $x_1,x_3$ induces the negative 5-cycle $ux_1wx_3v$, and adding $x_3,x_5$ induces the negative 5-cycle $ux_5wx_3v$. But adding $x_1,x_4$ induces a positive 5-cycle which is the maximal balanced set $B_3$. And adding each of $x_2$, $x_3$, and $x_5$ results in, respectively, maximal balanced sets  $B_4$, $B_5$, and $B_6$.
		
		Now assume $w\notin B$. From each of the pairs $x_2, x_3$ and $x_4,x_5$ at most one can be in $B$ because of the negative 4-cycles: $ux_2x_3v$ and $ux_5x_4v$. 
		Suppose $x_1\in B$, then $x_5\notin B$ because of the negative triangle $ux_1x_5$. Moreover, since we do not allow $B$ to contain the positive triangle $ux_1x_2$, $x_2\notin B$. Similarly, since we do not allow $B$ to contain the positive triangle $vx_3x_4$, we cannot have both $x_3$ and $x_4$ in $B$. Thus we can either add $x_3$ which results in $B_7$ or we can add $x_4$ which results in $B=\{u,v, x_1,x_4\}$, however being subset of $B_3$, this is not a maximal balanced set of the type we consider. If $x_1$ is not in $B$ either, then we can either add the pair $x_2,x_5$ which results in $B_{8}$, or $x_2,x_4$ which gives us $B_{9}$, or, noting that $vx_3x_4$ is a positive triangle we do not want to have, we may also add $x_3x_5$ to our set and get $B_{10}$.

		All together, we have $10$ (maximal) balanced sets to work with. The first two, $B_1$ and $B_2$, each contain all vertices of one of the two positive triangles we have considered. The balanced sets $B_3,\ldots,B_{10}$ each is missing one of the five specified negative triangles.
	\end{proof}

	This property is already enough to build a planar signed simple graph which does not admits a balanced 2-coloring. Consider a triangle $u_1,u_2,u_3$ and add a copy of $\widehat{W}'$ on each edge identifying that edge with $uv$. If the resulting graph is covered with two balanced sets $B_1$ and $B_2$, then one of the two, say $B_1$, contains an edge, say $u_1u_2$, of the triangle $u_1u_2u_3$. Then in copy $\widehat{W}'_{u_1u_2}$ of $\widehat{W}'$, one of the triangles of $\mathcal{T}$ has no element in $B_1$, but then $B_2$ is not a balanced set. 
	
	To improve on this, we need to strengthen properties of the triangles in $\mathcal{T}$ as follows.
	
	Given integers $p$ and $q$, where $2q \leq p \leq \frac{5q}{2}$, let $m_{p,q}$ be the smallest integer satisfying the following:
	
	\begin{quote}
		There exists a planar signed simple graph $\widehat{T}$ whose outer face is a negative triangle and has the property that in every balanced $(p,q)$-coloring of $\widehat{T}$ at most $m_{p,q}$ colors miss the outer face. 
	\end{quote}
	
	By completing each of the seven triangles in $\mathcal{T}$ of the gadget $\widehat{W}'$ to a suitable copy of $\widehat{T}$ the resulting gadget, denoted  $\widehat{W}^{*}$, has the following property.
	
	\begin{lemma}\label{lem:7Triangles}
		In every balanced $(p,q)$-coloring of $\widehat{W}^*$ we have $|c(u)\cap c(v)|\leq 7m_{p,q}$.
	\end{lemma}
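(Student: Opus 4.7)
The plan is to combine Lemma~\ref{lem:MissingTriangles} with a simple double-counting argument that exploits the defining property of $\widehat{T}$. Fix a balanced $(p,q)$-coloring $c$ of $\widehat{W}^*$, and write $P := c(u) \cap c(v)$. First I would note that for each color $\alpha \in P$, the vertex set $A_\alpha := \{x \in V(\widehat{W}^*) : \alpha \in c(x)\}$ is a balanced set of $\widehat{W}^*$, and since $\widehat{W}'$ sits as a signed subgraph of $\widehat{W}^*$, the restriction $A_\alpha \cap V(\widehat{W}')$ is a balanced set of $\widehat{W}'$ containing both $u$ and $v$.

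Next I would invoke Lemma~\ref{lem:MissingTriangles}: for each $\alpha \in P$ there exists at least one triangle $T_\alpha \in \mathcal{T}$ that $A_\alpha$ avoids entirely, i.e., the color $\alpha$ is assigned to none of the three vertices of $T_\alpha$. For each $T \in \mathcal{T}$ let $P_T := \{\alpha \in P : \alpha \text{ misses } T\}$; then $P = \bigcup_{T \in \mathcal{T}} P_T$, so it suffices to show $|P_T| \leq m_{p,q}$ for every $T \in \mathcal{T}$.

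For this, recall that $\widehat{W}^*$ is obtained from $\widehat{W}'$ by inserting, into the face bounded by each $T \in \mathcal{T}$, a copy $\widehat{T}_T$ of the gadget $\widehat{T}$ whose outer face is exactly $T$. Restricting the coloring $c$ to $V(\widehat{T}_T)$ yields a balanced $(p,q)$-coloring of $\widehat{T}_T$ (negative cycles in $\widehat{T}_T$ remain negative cycles in $\widehat{W}^*$, so balance is preserved by restriction). By the defining property of $m_{p,q}$, at most $m_{p,q}$ colors of the platter miss the outer face of $\widehat{T}_T$, and in particular $|P_T| \leq m_{p,q}$.

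Putting the pieces together yields
\[
|c(u) \cap c(v)| \;=\; |P| \;\leq\; \sum_{T \in \mathcal{T}} |P_T| \;\leq\; 7\, m_{p,q}.
\]
I do not anticipate a real obstacle here: Lemma~\ref{lem:MissingTriangles} and the definition of $m_{p,q}$ are doing all the work, and the only thing to be careful about is verifying that balanced colorings restrict well to signed subgraphs glued along the designated outer face, which is immediate from the construction.
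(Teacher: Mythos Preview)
Your proof is correct and follows essentially the same route as the paper's own argument: invoke Lemma~\ref{lem:MissingTriangles} so that every color shared by $u$ and $v$ misses some $T\in\mathcal{T}$, then use the defining property of $m_{p,q}$ on the copy of $\widehat{T}$ glued into that face to bound the number of colors missing each $T$ by $m_{p,q}$, and sum over the seven triangles. The paper does this in three sentences without the explicit $P_T$ notation or the remark on restriction to subgraphs, but the logic is identical.
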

	
	\begin{proof}
		By Lemma~\ref{lem:MissingTriangles} each color class containing both $u$ and $v$, misses at least one of the seven triangles in $\mathcal{T}$. But each of these triangles, being completed to a copy of $\widehat{T}$, can be missed by at most $m_{p,q}$ colors. Thus, the total number of colors classes containing both $u$ and $v$ is bounded above by $7m_{p,q}$. 
	\end{proof}
	
	The main upper bound on the value of $m_{p,q}$ is as follows.
	
	\begin{lemma}\label{lem:BoundingMissingColors}
		Given $p$ and $q$, $2q\leq p \leq \frac{5}{2}q $, we have $m_{p,q}\leq 2p-4q$.
	\end{lemma}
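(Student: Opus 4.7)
The plan is to take $\widehat{T}$ to be the signed $K_4$ switching equivalent to $(K_4,-)$, in which every triangle is negative, with one of its four triangular faces designated as the outer face. Label the outer vertices $a,b,c$ and the interior vertex $d$. Since $(K_4,-)$ is a planar signed simple graph and each of its faces is a negative triangle, $\widehat{T}$ satisfies the structural requirements in the definition of $m_{p,q}$. I would then establish the bound by a short double-counting argument applied to an arbitrary balanced $(p,q)$-coloring $\phi$ of $\widehat{T}$.

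Two structural facts drive the count. First, every three vertices of $(K_4,-)$ span a negative triangle, so each color class of $\phi$ contains at most two vertices of $\widehat{T}$. Second, any color appearing on $d$ can appear on at most one of $a,b,c$; otherwise $d$ together with two outer vertices would form a monochromatic negative triangle, contradicting the fact that the color class is a balanced set.

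Next I partition the palette $[p]$ according to the number of occurrences on the outer face, letting $A$, $B$, $C$ be the sets of colors appearing $0$, $1$, $2$ times on $\{a,b,c\}$ respectively. Two identities are immediate from double counting: $|A|+|B|+|C|=p$ and $|B|+2|C|=3q$. The key inequality is $q \leq |A|+|B|$, which follows from the second structural fact, since the $q$ colors of $\phi(d)$ must all come from $A\cup B$. Eliminating $|B|$ and $|C|$ between the two identities gives $|B|=2p-2|A|-3q$, and substituting into the inequality yields $|A|\leq 2p-4q$. As $|A|$ is precisely the number of colors that miss the outer face, this proves $m_{p,q}\leq 2p-4q$.

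There is no real obstacle; the only mild subtlety is that colors unused on $\widehat{T}$ get counted in $A$, but this is harmless because the inequality $q\leq |A|+|B|$ only bounds $|A\cup B|$ from below and so remains valid. A secondary point worth checking is that $2p-4q$ is a nonnegative integer in the stipulated range $2q\leq p\leq \tfrac{5q}{2}$, which is immediate from $p\geq 2q$.
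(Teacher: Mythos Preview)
Your proof is correct and follows essentially the same approach as the paper: both take $\widehat{T}=(K_4,-)$ and use a short double count exploiting that every color class has size at most two. The paper's bookkeeping is marginally different---it counts over all four vertices at once and bounds the number $l$ of colors missing \emph{some} face by $4q\le 2(p-l)+l$, which is a slightly stronger intermediate statement---whereas you separate the outer triangle from the interior vertex; but the idea and the resulting bound are identical.
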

	
	\begin{proof}
		In fact we prove the stronger statement that in a $(p,q)$-coloring of $(K_4,-)$, the total number $l$ of the colors each missing on at least one of the triangles is at most $2p-4q$. Obviously, $m_{p,q}\leq l$. To observe the claim, let $\phi$ be a $(p,q)$-coloring of $(K_4,-)$. Since each of the $p-l$ colors must appear at most twice, we have $4q\leq 2(p-l)+l$, which implies $m_{p,q} \leq l\leq 2p-4q$.
	\end{proof}
	
	Combining this with the following lower bound on $m_{p,q}$, we get an improved lower bound on the fractional balanced chromatic number of planar graphs.
	
	Let $\widehat{G}^{*}$ be the signed graph obtained from $K_3$ by replacing each edge with a copy of $\widehat{W}^{*}$ and consider it with a planar embedding where the outer face is a negative cycle. We have:
	
	\begin{lemma}\label{lem:m<p-3q+21m}
		In every balanced $(p,q)$-coloring of $\widehat{G}^{*}$ at most $p-3q+21m_{p,q}$ colors miss the outer face. 
	\end{lemma}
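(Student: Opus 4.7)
The plan is to combine \Cref{lem:7Triangles} with a simple inclusion--exclusion (Bonferroni) argument on the three outer vertices $u_1,u_2,u_3$ of the base triangle of $\widehat{G}^*$.

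First I would fix a balanced $(p,q)$-coloring $c$ of $\widehat{G}^*$ and unfold the construction: by definition $\widehat{G}^*$ is built from a triangle on $u_1,u_2,u_3$ by placing, for each of the three pairs $\{i,j\}\subseteq\{1,2,3\}$, a copy $\widehat{W}^*_{ij}$ of $\widehat{W}^*$ in which the distinguished vertices $u,v$ have been identified with $u_i,u_j$ respectively. Restricting $c$ to any one of these copies is itself a balanced $(p,q)$-coloring of $\widehat{W}^*_{ij}$, so \Cref{lem:7Triangles} applies and yields
\[
|c(u_i)\cap c(u_j)|\leq 7m_{p,q} \qquad \text{for every } 1\leq i<j\leq 3.
\]

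Next I would bound the total number of colors used by the three outer vertices. Since each $|c(u_i)|=q$, the Bonferroni inequality gives
\[
\bigl|c(u_1)\cup c(u_2)\cup c(u_3)\bigr|
\ \geq\ \sum_{i=1}^{3}|c(u_i)|-\sum_{1\leq i<j\leq 3}|c(u_i)\cap c(u_j)|
\ \geq\ 3q-3\cdot 7 m_{p,q}\ =\ 3q-21 m_{p,q}.
\]

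Finally I would translate this into a bound on colors missing the outer face. The chosen planar embedding places $u_1,u_2,u_3$ on the boundary of the outer face, so any color that misses the outer face in particular misses each of $u_1,u_2,u_3$; hence the set of colors missing the outer face is contained in the complement of $c(u_1)\cup c(u_2)\cup c(u_3)$. Its size is therefore at most
\[
p-\bigl|c(u_1)\cup c(u_2)\cup c(u_3)\bigr|\ \leq\ p-3q+21 m_{p,q},
\]
which is exactly the claimed inequality. There is no real obstacle: the substantive content lives in \Cref{lem:MissingTriangles} and \Cref{lem:7Triangles}, and this statement is just the bookkeeping that turns the pairwise intersection bound on $u,v$ into a bound on colors avoiding the three-vertex outer boundary of $\widehat{G}^*$.
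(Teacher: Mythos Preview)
Your proof is correct and follows essentially the same approach as the paper: apply \Cref{lem:7Triangles} on each of the three copies of $\widehat{W}^*$ to bound the pairwise intersections by $7m_{p,q}$, deduce that $|c(u_1)\cup c(u_2)\cup c(u_3)|\ge 3q-21m_{p,q}$, and conclude. The only cosmetic difference is that the paper obtains the union bound by noting that the outer triangle is negative, so no color lies in all three sets (making inclusion--exclusion exact), whereas you use the Bonferroni inequality directly; both yield the same bound.
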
  
	
	\begin{proof}
		Let $y_1$, $y_2$, and $y_3$ be the three vertices of the original triangle. Observe that the number of colors repeating between $y_i$ and $y_j$, $1\leq i<j\leq 3$, is at most $7m_{p,q}$ because of the copy of the gadget $\widehat{W}^{*}$ connecting them. Since no color can appear in all three, we have at least $3q-21m_{p,q}$ distinct colors on the vertices $y_1$, $y_2$, and $y_3$. Thus the number of missing colors is at most $p-3q+21m_{p,q}$.
	\end{proof}
	
	By taking $\widehat{G}^{*}$ of this lemma as an example of $\widehat{T}$ for the definition of $m_{p,q}$ we have the following corollary.
	
	\begin{corollary}\label{cor:m<p-3q+21m}
		For any integer $p$ and $q$ satisfying $2q\leq p \leq \frac{5}{2}q$ we have $m_{p,q}\leq p-3q+21m_{p,q}$.
	\end{corollary}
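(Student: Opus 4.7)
\textbf{The plan.} The corollary is essentially a direct application of Lemma~\ref{lem:m<p-3q+21m} combined with the minimality in the definition of $m_{p,q}$. All one must do is verify that the graph $\widehat{G}^{*}$ of Lemma~\ref{lem:m<p-3q+21m} is itself a legitimate candidate for the rôle of $\widehat{T}$ in that definition, after which the inequality falls out immediately.

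\textbf{Step 1: exhibit a planar embedding with a negative triangle as the outer face.} Recall that in each copy of the gadget $\widehat{W}^{*}$ the two distinguished vertices $u$ and $v$ are joined by a negative edge (the long curved edge seen in Figure~\ref{fig:biggadget1}, inherited through all the subsequent refinements $\widehat{W}',\widehat{W}''$). When three copies of $\widehat{W}^{*}$ are pasted together by identifying their $\{u,v\}$ pairs with the three pairs from $\{y_1,y_2,y_3\}$, the three $uv$-edges form a triangle $y_1y_2y_3$ whose sign is $(-)(-)(-)=-$. I would first check that one can realise a planar embedding of $\widehat{G}^{*}$ in which this triangle bounds the outer face, with each gadget body drawn inside the triangle, on a separate side of an interior vertex layout analogous to the left-hand picture of Figure~\ref{fig:K3+Gadget}. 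This shows that $\widehat{G}^{*}$ is a planar signed simple graph whose outer face is a negative triangle, so it qualifies as an instance of $\widehat{T}$ in the definition of $m_{p,q}$.

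\textbf{Step 2: apply Lemma~\ref{lem:m<p-3q+21m}.} By that lemma, in every balanced $(p,q)$-coloring of $\widehat{G}^{*}$, the number of colors that do not appear on any of $y_1,y_2,y_3$ — i.e., the number of colors missing the outer face — is at most $p-3q+21m_{p,q}$.

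\textbf{Step 3: conclude by minimality.} By the very definition of $m_{p,q}$ as the smallest integer for which some valid $\widehat{T}$ exists with the stated missing-color bound, the existence of the specific witness $\widehat{G}^{*}$ with missing-color bound $p-3q+21m_{p,q}$ yields
\[
m_{p,q} \leq p-3q+21m_{p,q},
\]
which is the desired inequality.

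\textbf{Expected obstacle.} The only genuine point to verify carefully — and the step that is not just a rewriting — is Step~1, the planar embedding issue: one must make sure the three $uv$-edges can simultaneously be drawn on the outer boundary while all three bodies of $\widehat{W}^{*}$ (themselves planar signed graphs with many internal triangles) fit into the three remaining regions determined by the triangle $y_1y_2y_3$. This is visually apparent from Figure~\ref{fig:K3+Gadget} but should be stated explicitly to justify that $\widehat{G}^{*}$ is a legitimate instance of $\widehat{T}$.
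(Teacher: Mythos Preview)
Your proposal is correct and follows exactly the approach the paper takes: use $\widehat{G}^{*}$ as a witness for $\widehat{T}$ in the definition of $m_{p,q}$ and read off the inequality from Lemma~\ref{lem:m<p-3q+21m}. The paper dispatches this in a single line (the planar embedding with negative-triangle outer face is asserted just before Lemma~\ref{lem:m<p-3q+21m}), whereas you spell out the verification of Step~1 more carefully, but the substance is identical.
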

	
	\begin{theorem}\label{thm:XbfPlanar}
		The fractional balanced chromatic number of signed planar simple graphs is at least $2+\frac{1}{41}$.
	\end{theorem}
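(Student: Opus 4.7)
The plan is to derive a contradiction by assuming $\chi_{fb}(\mathcal{SP}) < 2 + \frac{1}{41} = \frac{83}{41}$ and combining the two estimates on $m_{p,q}$ already established. Concretely, suppose for contradiction that there exist positive integers $p,q$ with $\frac{p}{q} < \frac{83}{41}$ such that every planar signed simple graph admits a balanced $(p,q)$-coloring.

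First I would check that the regime $2q \leq p \leq \frac{5q}{2}$ required by Lemma~\ref{lem:BoundingMissingColors} and Corollary~\ref{cor:m<p-3q+21m} holds. The theorem proved earlier in the paper exhibits a planar signed simple graph with $\chi_{fb} > 2$, so the contrary assumption forces $\frac{p}{q} \geq 2$; and $\frac{p}{q} < \frac{83}{41} < \frac{5}{2}$ gives the other inequality. Under this hypothesis $m_{p,q}$ is well-defined, since any planar signed graph with a negative-triangle outer face is an admissible $\widehat{T}$ (it has a $(p,q)$-coloring by assumption, and the count of colors missing its outer face is a finite nonnegative integer).

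Next I would combine the two bounds. Lemma~\ref{lem:BoundingMissingColors} gives the upper estimate $m_{p,q} \leq 2p - 4q$, while Corollary~\ref{cor:m<p-3q+21m} rearranges into the lower estimate $20\,m_{p,q} \geq 3q - p$. Chaining these yields
\[
3q - p \;\leq\; 20\,m_{p,q} \;\leq\; 20(2p-4q) \;=\; 40p - 80q,
\]
so $83q \leq 41p$, i.e., $\frac{p}{q} \geq \frac{83}{41}$, contradicting the choice of $(p,q)$.

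The real content of the argument is already packaged into the preceding lemmas: the gadget $\widehat{W}^*$ and its ``seven missing triangles'' property (Lemmas~\ref{lem:MissingTriangles} and~\ref{lem:7Triangles}), together with the $K_3$-iteration producing the self-referential bound on $\widehat{G}^*$ (Lemma~\ref{lem:m<p-3q+21m}), are what make the estimate nontrivial. The final step is pure algebra. The only mild subtlety is confirming the parameter range $2q \leq p \leq \frac{5q}{2}$, which is immediate from the earlier $\chi_{fb} > 2$ construction and the inequality $\frac{83}{41} < \frac{5}{2}$; I do not anticipate any genuine obstacle beyond recording this verification carefully.
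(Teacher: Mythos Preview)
Your proposal is correct and follows essentially the same route as the paper: chain the upper bound $m_{p,q}\le 2p-4q$ from Lemma~\ref{lem:BoundingMissingColors} with the rearranged bound $20\,m_{p,q}\ge 3q-p$ from Corollary~\ref{cor:m<p-3q+21m} to force $p/q\ge \tfrac{83}{41}$. Your explicit verification of the regime $2q\le p\le \tfrac52 q$ and of the well-definedness of $m_{p,q}$ under the working hypothesis are details the paper leaves implicit.
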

	
	\begin{proof}
		By \Cref{lem:BoundingMissingColors} we have $m_{p,q}\leq 2p-4q$ and by \Cref{cor:m<p-3q+21m} we have $m_{p,q}\leq p-3q+21m_{p,q}$. In other words $m_{p,q} \leq \frac{3q-p}{20}$. Combining the inequalities we have $\frac{p}{q}\geq \frac{83}{41}$.
	\end{proof}
	
	We shall note that the lower bound of $\frac{83}{41}$ provided here has the notion of limit and we do not have a concrete example of planar signed simple graph whose fractional balanced chromatic is $2+\frac{1}{41}$. What we have is a sequence of planar singed simple graphs $\widehat{G}_0, \widehat{G}_1, \widehat{G}_2, \ldots$ where $\widehat{G}_0$ is just $(K_4,-)$. To build the rest of the sequence we first define a signed graph $\widehat{U}$ built from $K_4$ by replacing each edge of it with a copy of $\widehat{W}'$. Observe that $\widehat{U}$ has 42 specified triangles each of which is a facial negative triangle. The element  $\widehat{G}_{i}$ of the sequence, then, is built from $\widehat{U}$ by completing each of the 42 specified  triangles with a copy of $\widehat{G}_{i-1}$. 
	
	Intuitively, if $\frac{p}{q}<\frac{83}{41}$, but $p$ and $q$ are relatively large, then the number of colors missing on the outer face of $\widehat G_{i+1}$ among all balanced $(p,q)$-colorings is strictly less than that of $\widehat G_i$. Then invoking \Cref{lem:7Triangles}, we further limit the number of common colors between any two vertices of $\widehat{G}_0$ which is $K_4$. This, in turn, increases the number of required colors.
	Formally, let $\mu_i$ be the maximum number of colors missing on the outer face of $\widehat G_i$, where the maximum is taken among all balanced $(p,q)$-coloring. By \Cref{lem:BoundingMissingColors}, we get: $\mu_0 = 2p-4q$ and  repeating the process of \cref{lem:m<p-3q+21m}, we get: $\mu_{i+1} \leq p-3q+21\mu_i $. Iterating this inequality, we have 
	\begin{align*}
		\mu_i   &\leq \sum_{j=0}^{i-1}21^j (p-3q) + 21^i\mu_0\\
		&\leq \frac{21^i-1}{20}(p-3q) + 21^i\mu_0\\ 	        
		&\leq \frac{21^i-1}{20}(p-3q) + 21^i(2p-4q)\\ 	        
		&\leq 21^i\frac{41p-83q}{20} - \frac{p-3q}{20} .\\ 	        
	\end{align*}
	If $\frac{p}{q} < \frac{83}{41}$, then for $i$ big enough, the right side of the equation becomes negative in which case $\widehat G_i$ does not admit a $(p,q)$-coloring.

	In the above sequence the signed graph $\widehat{G}_{1}$ is the signed graph of \Cref{fig:K3+Gadget}. In the next section we show that $\chi_{fb}(\widehat{G}_{1})=2+\frac{2}{85}$. Furthermore, we show in the next section that each members of this sequence admits an $(83,41)$-coloring, hence this lower bound cannot be improved further by repeated use of the gadget $\widehat{W}''$.
	
	\section{Exact value of $\chi_{fb}(\widehat{G}_1)$ and $\frac{83}{41}$-colorings}
	
	Towards better understanding of the behavior of the sequence $\chi_{fb}(\widehat{G}_i)$ which is used to prove \Cref{thm:XbfPlanar} we provide the exact value for $\chi_{fb}(\widehat{G}_1)$. We then show that the limit of $\chi_{fb}(G_i)$ is $\frac{83}{41}$.

	\begin{theorem}
		For the planar signed simple graph $\widehat{G}_1$ defined in the previous section we have $\chi_{fb}(\widehat{G}_1)=2+\frac{2}{85}$. 
	\end{theorem}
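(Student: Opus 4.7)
The lower bound $\chi_{fb}(\widehat G_1)\geq 2+\tfrac{2}{85}$ is an instance of the framework of \Cref{sec:lowerbound_for_fb} specialized to depth $i=1$. Given any balanced $(p,q)$-coloring of $\widehat G_1$, apply \Cref{lem:7Triangles} inside each of the six copies of $\widehat W^{*}$ with $\widehat T=(K_4,-)$, so that $m_{p,q}\leq 2p-4q$ by \Cref{lem:BoundingMissingColors}, to obtain $|c(y_i)\cap c(y_j)|\leq 14p-28q$ for every pair of base vertices $y_1,\ldots,y_4$. Writing $n_S$ for the number of colors whose incidence set inside $\{y_1,\ldots,y_4\}$ equals $S$, we have
\[
\sum_S |S|\, n_S \;=\; 4q, \qquad \sum_S \binom{|S|}{2} n_S \;\leq\; 84p-168q, \qquad \sum_S n_S \;\leq\; p.
\]
Minimizing $\sum_S n_S$ under the first two constraints: since $(|S|-1)/\binom{|S|}{2}=2/|S|$ is maximal at $|S|=2$, the optimum concentrates on $|S|\in\{1,2\}$ and gives $\sum_S n_S\geq 4q-(84p-168q)=172q-84p$. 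Combined with $\sum_S n_S\leq p$, this forces $85p\geq 172q$, i.e.\ $p/q\geq 172/85$.

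For the matching upper bound I plan to exhibit a balanced $(172,85)$-coloring that realizes the extremal profile above. Split the $172$ colors into $4$ singletons $s_1,\ldots,s_4$ (with $s_i$ placed on $y_i$ only) and $6$ blocks $P_{ij}$ of $28$ pair-colors each (each color of $P_{ij}$ placed on both $y_i$ and $y_j$), so that every $y_i$ accumulates $1+3\cdot 28=85$ colors and the palette has $4+6\cdot 28=172$ colors in total. Each color class must then be extended to a balanced set of $\widehat G_1$ by choosing, inside every gadget copy $\widehat W^{*}_{ab}$, a balanced set whose intersection with $\{y_a,y_b\}$ matches the prescribed incidence; note that two distinct copies of $\widehat W^{*}$ share only a base vertex, so the extensions inside different copies are independent and no cross-gadget negative cycle can arise. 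Inside a fixed $\widehat W^{*}_{ab}$ the $172$ colors split as $28+57+57+30$ (both endpoints, $u$-only, $v$-only, or neither), and the $28$ both-endpoints colors are distributed among the ten maximal balanced sets $B_1,\ldots,B_{10}$ classified in the proof of \Cref{lem:MissingTriangles}, each of which is further extended into the seven filled negative triangles of $\mathcal T$.

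The main obstacle is the internal bookkeeping: the $21$ internal vertices of each $\widehat W^{*}$ (the eight vertices $w,z,t,x_1,\ldots,x_5$ of $\widehat W$, together with the $13$ vertices added when forming $\widehat W^*$ from $\widehat W$) must each end up with exactly $85$ colors. This is delicate because the ten sets $B_1,\ldots,B_{10}$ cover $w$ and the $x_i$'s very unevenly and miss $z$ and $t$ entirely, so the $57+57$ one-endpoint colors have to take up the slack on $z$ and $t$ while leaving the other internal vertices at exactly $85$. I plan to resolve this by a symmetric design, exploiting the involution $u\leftrightarrow v$ of $\widehat W^{*}$ and the $S_4$-action on $\{y_1,\ldots,y_4\}$ to equalize the counts on $\{x_1,\ldots,x_5\}$ and $w$, and by using the freedom in the filled $(K_4,-)$ gadgets, whose central vertex can be added to any color class whose restriction to the surrounding triangle contains at most one vertex, to absorb whatever residual discrepancy remains.
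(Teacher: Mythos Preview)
Your lower bound is correct and essentially equivalent to the paper's. The paper argues via pigeonhole on a single pair: in the underlying $(K_4,-)$ no colour lies on three base vertices, so at least $4q-p$ colours lie on exactly two, whence some pair shares at least $(4q-p)/6$ colours; combining with the per-pair bound $7(2p-4q)$ gives $85p\ge 172q$. Your LP variant (summing over all six pairs and using $\binom{|S|}{2}\ge |S|-1$) reaches the identical inequality without invoking the ``no three'' fact, which is a mildly nicer route.

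The genuine gap is the upper bound. What you have written is a plan, not a construction, and the plan rests on a symmetry that does not exist. The gadget $\widehat W$ has $\deg(u)=6$ (neighbours $v,z,t,x_1,x_2,x_5$) and $\deg(v)=5$ (neighbours $u,z,t,x_3,x_4$), so there is no involution $u\leftrightarrow v$ of $\widehat W$, and hence none of $\widehat W^{*}$; you therefore cannot symmetrize the $57$ ``$u$-only'' colours against the $57$ ``$v$-only'' colours. Similarly, the $S_4$-action on $\{y_1,\dots,y_4\}$ only permutes the six gadget copies; it does nothing inside a single copy and so cannot equalize the counts on $w,x_1,\dots,x_5$. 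The slack in the filled $(K_4,-)$ triangles is real but limited: each central vertex can absorb at most those colours that hit at most one vertex of its boundary triangle, and you have not shown this freedom suffices.

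The paper closes this by brute force: it exhibits an explicit balanced $(172,85)$-colouring of $\widehat W$ as a weighted list of $23$ balanced sets (with repetitions summing to $172$) in which $|c(u)\cap c(v)|=28$ and each of the seven specified triangles has at most four missing (resp.\ repeated) colours, so that the colouring extends over the added $(K_4,-)$ and mini-gadget vertices. It then glues the six copies exactly along the $4+6\cdot 28$ palette decomposition you describe. In short, your global architecture for the upper bound matches the paper's, but the ``internal bookkeeping'' you flag as the main obstacle is the entire content of the upper bound, and it is resolved by an explicit table rather than by symmetry.
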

		
	\begin{proof}
		For the lower bound, consider a $(p,q)$-coloring $\phi$ of $\widehat{G}_1$. By \Cref{lem:BoundingMissingColors}, each of the 42 specified triangles of $\widehat{U}$ misses at most $2p-4q$ colors. Applying this on any copy of $\widehat{W}'$, by \Cref{lem:7Triangles} the number of common colors for any two vertices of the main $K_4$ is at most $7(2p-4q)$. On the other hand the number of common colors for at least one pair of vertices of the main $K_4$ is at least $\frac{4q-p}{6}$. Thus we have $7(2p-4q) \geq \frac{4q-p}{6}$, hence $\frac{p}{q}\geq \frac{172}{85}$.
		
		To give a $(172, 85)$-coloring of $\widehat{G}_1$ we first give a $(172, 85)$-coloring of the gadget $\widehat{W}$ of Figure~\ref{fig:biggadget1} such that $|c(u)\cap c(v)|=28$, and that the following triangles satisfy the triangle properties: $ux_1x_2$, $vx_3x_4$ (both positive triangles, having at most 4 colors appearing in all vertices), $wx_1x_2$, $wx_3x_4$, $zx_2x_3$, $tx_4x_5$ (all negative triangles, having at most 4 colors missing). We note that this coloring can be extended to a $(172, 85)$-coloring of $\widehat{W}''$.
		
		This coloring is given in \Cref{tab:color_classes}. In reading this table, when viewed as a $(p,q)$-coloring, the last column shows how many times a balanced set is used as a colors class. Equivalently, this number after being divided by $85$ can be viewed as the weight of the balanced set in a fractional coloring. The first family of balanced sets in the table are the ones containing both $u$ and $v$. Their labeling matches that of \Cref{lem:MissingTriangles} noting that $B_7$ is not used as a color class. The balanced sets $B_{11}, B_{12}, B_{13}$ contain $u$ but not $v$ and $B_{14}, B_{15}, B_{16}$ contain $v$ but not $u$. The remaining ones contain neither. In total we have a list of 23 distinct balanced sets, but together with repetition we have 85. 
		
		\begin{table}[htb]
			\centering
			\begin{tabular}{|c|c|c|c|}
				\hline
				Color classes & Vertices & Description & Repetition  \\
				\hline
				\hline
				$B_1$ & $u,v, x_1, x_2,x_4$ & Contains a positive triangle &$4$\\\Xhline{.5\arrayrulewidth}
				$B_2$ & $u,v, x_1, x_3,x_4$ & Contains a positive triangle &$4$\\ \Xhline{.5\arrayrulewidth}
				$B_3$ & $u,v,w, x_1, x_4$ & 5-cycle &$2$\\ \Xhline{.5\arrayrulewidth}
				$B_4$ & $u,v,w, x_2$ & Forest &$2$\\ \Xhline{.5\arrayrulewidth}
				$B_5$ & $u,v,w, x_3$ & Forest &$2$\\ \Xhline{.5\arrayrulewidth}
				$B_6$ & $u,v,w, x_5$ & Forest &$2$\\ \Xhline{.5\arrayrulewidth}
				$B_8$ & $u,v, x_2, x_5$ & Forest &$4$\\ \Xhline{.5\arrayrulewidth}
				$B_{9}$ & $u,v, x_2,x_4$ & Forest &$4$\\ \Xhline{.5\arrayrulewidth}
				$B_{10}$ & $u,v, x_3, x_5$ & Forest &$4$\\ \Xhline{.5\arrayrulewidth}
				\hline
				$B_{11}$ & $u,w, t, x_1,x_3$ & Forest & 49 \\ \Xhline{.5\arrayrulewidth}
				$B_{12}$ & $u,x_2, x_3, x_4,x_5$ & Positive 5-cycle & 4 \\ \Xhline{.5\arrayrulewidth}
				$B_{13}$ & $u, w, z, t, x_2, x_4$ & 6-cycle with a chord & 4 \\ \Xhline{.5\arrayrulewidth}
				$B_{14}$ & $v, z, x_2,x_4, x_5$ & Forest & 49 \\ \Xhline{.5\arrayrulewidth}
				$B_{15}$ & $v,x_1, x_2, x_4,x_5$ & Forest & 4 \\ \Xhline{.5\arrayrulewidth}
				$B_{16}$ & $v, w, z, t, x_2, x_4$ & 6-cycle with a chord & 4 \\ \Xhline{.5\arrayrulewidth}
				$B_{17}$ & $z,t, w, x_1, x_3$ & Forest & 8 \\ \Xhline{.5\arrayrulewidth}
				$B_{18}$ & $z,t, w, x_3, x_5$ & Forest & 6 \\ \Xhline{.5\arrayrulewidth}
				$B_{19}$ & $z,t, x_1, x_3, x_5$ & Forest & 4 \\ \Xhline{.5\arrayrulewidth}
				$B_{20}$ & $z,t, w, x_1, x_4$ & Forest  & 4 \\ \Xhline{.5\arrayrulewidth}
				$B_{21}$ & $z, t, w, x_2, x_5$ & Forest  & 2 \\ \Xhline{.5\arrayrulewidth}
				$B_{22}$ & $z, t, x_1, x_3, x_5$ & Forest & 2 \\ \Xhline{.5\arrayrulewidth}
				$B_{23}$ & $t, x_1, x_2, x_3, x_5$ & Forest  & 2 \\ \Xhline{.5\arrayrulewidth}
				$B_{24}$ & $z, x_1, x_2, x_4, x_5$ & Forest & 2 \\ \Xhline{.5\arrayrulewidth}
			\end{tabular}
			\caption{\centering Description of the coloring of $\widehat{G}_1$}
			\label{tab:color_classes}
		\end{table}
		
		In order to use this coloring of $\widehat{W}''$ for a $(172,85)$-coloring of $\widehat G_1$, we first partition the set $[172]$ of colors to $\{1,2,3,4\}$ and six sets $A_{ij}=A_{ji}$, $1\leq i < j\leq 4$, each of order 28. Then for the copy $W_{ij}$ of $\widehat{W}''$, after a renaming of the above coloring of $\widehat{W}''$, we get a coloring where the 85 colors corresponding to $u_i$ are $\displaystyle \bigcup_{j\neq i} \A_{ij} \cup \{i\}$. 
	\end{proof}
	
	\vspace{10pt}
	Next we show that the iteration of our gadget, given in the sequence $\widehat{G}_0, \widehat{G}_1, \ldots$ does not pass the threshold of $2+\frac{1}{41}$. In fact we show that any planar signed graph built from the negative triangle by repeated applications of the following two operations admits a balanced $(83,41)$-coloring. Evidently the signed graphs of the sequence $\widehat{G}_i$ are also built in this way. 
	
	\begin{itemize}
		\item[1] Insert a vertex inside a facial negative triangle, connect it to the vertices of the triangle and assign signs in such a way that the resulting signed $K_4$ is switching equivalent to $(K_4,-)$.	
		\item[2] Given an edge $xy$ of a previously built signed graph, add a copy $\widehat{W}'$ of \Cref{fig:biggadget2} such that $xy$ is identified with $uv$.   
	\end{itemize}
	
	\begin{theorem}
	 Every signed graph built from $(K_3,-)$ using operations $[1]$ and $[2]$ admits a balanced $(83,41)$-coloring.
	\end{theorem}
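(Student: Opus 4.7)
The plan is to prove the statement by induction on the number of operations used to build $\widehat{G}$ from $(K_3,-)$. Throughout the induction I maintain the invariant that the current signed graph admits a balanced $(83,41)$-coloring $c$ such that $|c(x)\cap c(y)|\leq 14$ for every edge $xy$; this invariant is consistent with the bound $|c(u)\cap c(v)|\leq 7m_{p,q}=14$ forced by \Cref{lem:7Triangles} together with $m_{p,q}\leq 2p-4q=2$ from \Cref{lem:BoundingMissingColors}.

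For the base case $\widehat{G}=(K_3,-)$, every color class has size at most $2$, so the counts $a_1+2a_2=123$ and $a_0+a_1+a_2=83$ force $a_2\in\{40,41\}$, and since $a_2$ equals the sum of the three pairwise intersections, I can distribute the paired colors so that each pair intersection is at most $14$. For operation~[1] (inserting $u$ inside a facial negative triangle $xyz$), at least $41$ colors appear on at most one vertex of $\{x,y,z\}$ and are therefore available for $u$; a direct count of the singletons at each vertex of the triangle (of the form $41-|c(v)\cap c(v')|-|c(v)\cap c(v'')|$) shows that $41$ of these available colors can be selected so that each new intersection $|c(u)\cap c(v)|$ is at most $14$ for $v\in\{x,y,z\}$, while the pre-existing intersections on edges of $xyz$ are unaffected.

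For operation~[2] (attaching a copy of $\widehat{W}'$ along an edge $xy=uv$) the extension is the crux. The invariant guarantees $|c(u)\cap c(v)|\leq 14$ on the boundary, which is exactly the bound permitted by \Cref{lem:MissingTriangles}: each of the (at most) $14$ colors shared by $u$ and $v$ is routed through one of the ten maximal balanced sets $B_1,\ldots,B_{10}$ of that lemma, while the remaining colors, each lying in at most one of $c(u),c(v)$, are routed through balanced sets containing at most one of $u,v$. Adapting the explicit list used in \Cref{tab:color_classes} for the $(172,85)$-coloring of $\widehat{G}_1$, one determines multiplicities of the balanced sets so that (i) the prescribed boundary coloring on $u,v$ is realized, (ii) each color class remains a balanced set of $\widehat{W}'$ in the ambient graph, and (iii) every new edge of the gadget still satisfies $|c(x')\cap c(y')|\leq 14$, so that the invariant propagates.

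The main obstacle is operation~[2]. The $(172,85)$-coloring in \Cref{tab:color_classes} uses odd multiplicities (for instance $B_{11}$ and $B_{14}$ each with weight $49$) and therefore cannot simply be halved to produce an $(83,41)$-coloring; a genuinely different weighting over the balanced sets of $\widehat{W}'$ must be exhibited. One must then verify that the resulting linear system is solvable in nonnegative integers for every admissible boundary coloring with $|c(u)\cap c(v)|\leq 14$, while simultaneously producing interior intersections compatible with any later applications of operation~[1] to facial negative triangles of the gadget. The tightness of the $83/41$ bound leaves very little slack, and proving this feasibility in full generality is the principal technical difficulty.
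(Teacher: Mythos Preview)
Your inductive strategy with an edge-intersection invariant is exactly the paper's approach, but there is a genuine gap: you yourself flag operation~[2] as ``the principal technical difficulty'' and do not carry it out. The paper closes this gap by strengthening the invariant to the \emph{two-sided} bound $|c(x)\cap c(y)|\in\{13,14\}$ for every edge, rather than your one-sided $\le 14$. This is the key maneuver you are missing. With the two-sided invariant, the boundary datum $|c(u)\cap c(v)|$ for operation~[2] can take only two values, so only two explicit extensions of a prescribed coloring of $uv$ to $\widehat{W}'$ are required; the paper supplies these as \Cref{tab:13} and \Cref{tab:14} (for $|c(u)\cap c(v)|=13$ and $14$ respectively), each checked so that every new interior edge has intersection exactly $14$, preserving the invariant. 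Under your invariant $\le 14$ alone you would in principle face all boundary values $0,1,\ldots,14$, which is precisely why your last paragraph reads as open-ended.

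For operation~[1] your counting sketch is essentially correct, but again the two-sided invariant is what makes it clean: on any negative triangle the three pairwise intersections must sum to at least $40$ (otherwise $3\cdot 41$ minus the sum exceeds $83$), so with $a_{ij}\in\{13,14\}$ only three boundary configurations occur up to symmetry, namely $(14,14,14)$, $(14,14,13)$, $(14,13,13)$, and the paper handles each with a one-line explicit choice of $(a_{14},a_{24},a_{34})\in\{13,14\}^3$. In short, your plan is right but incomplete; the missing content is the tightening of the invariant from $\le 14$ to $\in\{13,14\}$ together with the two concrete $(83,41)$-colorings of $\widehat{W}'$ that make the induction for operation~[2] go through.
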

	
	\begin{proof}
		We prove a stronger claim which is easier to carry on by induction. We claim there is a balanced $(83,41)$-coloring of each graph built in this way satisfying extra condition that for each edge $xy$ the number of common colors of $x$ and $y$ is either 13 or 14.  
		
		For the base of induction we consider $\widehat{T}=(K_3, -)$ on vertices $u_1, u_2, u_3$ and, up to a relabeling of the vertices, present three possibilities for a balanced $(83,41)$-coloring of $(K_3, -)$. These colorings are distinguished by the number of common colors, $a_{ij}$, used on vertices $u_i$ and $u_j$.
		
		\begin{itemize}
			\item $(a_{12}, a_{13}, a_{23})=(14,14,14)$
			
			\item$(a_{12}, a_{13}, a_{23})=(14,14,13)$
			
			\item $(a_{12}, a_{13}, a_{23})=(14,13,13)$
		\end{itemize}
	
	Furthermore, we observe that by the extra assumption of $13\leq a_{ij} \leq 14$,  $(a_{12}, a_{13}, a_{23})=(13,13,13)$ is not possible because then the total number of colors would be at least $3\times 41 - (a_{12}+ a_{13} +a_{23})=84$. Thus these three possibilities for coloring $(K_3, -)$ are indeed the only possibilities when the limit of $13$ and $14$ is applied on the number common colors of the end points of an edge.
	
	To prove the inductive claim, first given a coloring $\phi$ of $\widehat{T}$ which is of one of the above three types, we show that there is an extension for operation $[1]$. We call the added vertex $u_4$. Since the four vertices induce a $(K_4,-)$, balanced sets are of size at most 2, so it is enough to present the number of commun colors between pairs of vertices ($a_{ij}$) and the colors appearing on exactly one vertex ($b_i$). 
	
	\begin{itemize}
		\item $(a_{12}, a_{13}, a_{23},a_{14},a_{24},a_{34})=(14,14,14,13,13,13) $, $(b_1,b_2,b_3,b_4) = (0,0,0,2)$
		
		\item $(a_{12}, a_{13}, a_{23},a_{14},a_{24},a_{34})=(14,14,13,13,14,14) $, $(b_1,b_2,b_3,b_4) = (0,0,0,0)$
		
		\item $(a_{12}, a_{13}, a_{23},a_{14},a_{24},a_{34})=(14,14,14,14,14,13) $, $(b_1,b_2,b_3,b_4) = (0,0,2,0)$
	\end{itemize}
	Note that since $u_4$ is only adjacent to $u_1,u_2,u_3$, there is no need to consider the rest of the graph.
	
	For operation [2] we consider signed graph $\widehat{G}$ which is built from $\widehat{T}$ by operations $[1]$ and $[2]$ and let $\widehat{G}'$ be built from $\widehat{G}$ by adding a copy of $\widehat{W}'$ on the edge $xy$ of $\widehat{G}$. By inductive hypothesis, $\widehat{G}$ admits a coloring $\phi$ where end points of each edge, in particular $xy$, have either 14 or 13 colors. Depending on which is the case, we use \Cref{tab:14} or \Cref{tab:13} to extend $\phi$ to a coloring of $\widehat{W}$. We note that in coloring of $\widehat{W}$ given in \Cref{tab:13}, we have $|c(u)\cap c(v)|=13$ but for every other edge the number of common colors on its end points is 14. 
	In coloring of \Cref{tab:13} for every edge the number of common colors on the end points of each edge is 14. 
	Now we just have to extend the coloring of $\widehat{W}$ to a coloring of $\widehat{W}'$. Recall that the difference is just a mini-gadget added in two specified positive triangles (see \Cref{fig:gadget1}).
	
	 Call $u_i$ the vertices of the outside triangle and $u_i'$ the vertices of the inside one, with $u_i$ and $u_i'$ being non-adjacent.
	In both triangles and both tables, the coloring induced on the positive triangle is the same: 
	
	$\set{u_1,u_2,u_3}:2, \set{u_i,u_{i+1}}:12,\set{u_i}:15$ (the indices are computed mod $3$)
	
	Then the coloring can be extended in the following way: 
	
	$\set{u_1,u_2,u_3}:2, \set{u_i,u_{i+1},u_i'}:12,\set{u_i,u_i',u_{i+1}'}:13, \set{u_i,u_{i+1}',u_{i+2}'}:1,\set{u_i,u_{i+2}'}:1$\\
	Note that this extension still verifies the property that every edge is in 14 color classes.
	Thus the condition of induction holds. 
	 		\end{proof}
	
	\begin{table}[H]
    \centering
    \begin{minipage}{0.45\textwidth}
        \centering
\begin{tabular}{|c|c|c|}
        \hline
        Name & Vertices & Repetition  \\
        \hline
        \hline
        $B_1$ & $u,v,x_1,x_2,x_4$ & 1 \\\Xhline{.5\arrayrulewidth}
        $B_2$ & $u,v,x_1,x_3,x_4$ & 2 \\\Xhline{.5\arrayrulewidth}
        $B_3$ & $u,v,x_1,x_3$ & 2 \\\Xhline{.5\arrayrulewidth}
        $B_4$ & $u,v,x_2,x_5$ & 2 \\\Xhline{.5\arrayrulewidth}
        $B_5$ & $u,v,x_2,x_4$ & 2 \\\Xhline{.5\arrayrulewidth}
        $B_6$ & $u,v,x_3,x_5$ & 2 \\\Xhline{.5\arrayrulewidth}
        $B_7$ & $u,v,x_5,w$ & 2 \\\Xhline{.5\arrayrulewidth} \hline
        $B_8$ & $u,x_1,x_3,x_4$ & 5 \\\Xhline{.5\arrayrulewidth}
        $B_9$ & $u,x_1,x_3,x_4,t$ & 1 \\\Xhline{.5\arrayrulewidth}
        $B_{10}$ & $u,x_1,x_4,w,z$ & 1 \\\Xhline{.5\arrayrulewidth}
        $B_{11}$ & $u,x_1,x_4,z,t$ & 2 \\\Xhline{.5\arrayrulewidth}
        $B_{12}$ & $u,x_2,x_3,x_4,x_5$ & 6 \\\Xhline{.5\arrayrulewidth}
        $B_{13}$ & $u,x_2,x_4,x_5$ & 1 \\\Xhline{.5\arrayrulewidth}
        $B_{14}$ & $u,x_2,x_4,w,z,t$ & 2 \\\Xhline{.5\arrayrulewidth}
        $B_{15}$ & $u,x_3,x_5,w$ & 1 \\\Xhline{.5\arrayrulewidth}
        $B_{16}$ & $u,x_3,w,z,t$ & 1 \\\Xhline{.5\arrayrulewidth}
        $B_{17}$ & $u,x_4,w,z,t$ & 8 \\\Xhline{.5\arrayrulewidth}
        $B_{18}$ & $v,x_1,x_2,x_3,x_5,t$ & 7 \\\Xhline{.5\arrayrulewidth}
        $B_{19}$ & $v,x_1,x_2,x_4,x_5$ & 6 \\\Xhline{.5\arrayrulewidth}
        $B_{20}$ & $v,x_1,x_4,w,z,t$ & 1 \\\Xhline{.5\arrayrulewidth}
        $B_{21}$ & $v,x_2,x_4,x_5,z$ & 1 \\\Xhline{.5\arrayrulewidth}
        $B_{22}$ & $v,x_2,x_5,w,z,t$ & 6 \\\Xhline{.5\arrayrulewidth}
        $B_{23}$ & $v,x_2,x_5,w,z$ & 5 \\\Xhline{.5\arrayrulewidth}
        $B_{24}$ & $v,x_2,x_3,x_5$ & 1 \\\Xhline{.5\arrayrulewidth}
        $B_{25}$ & $v,x_4,w,z$ & 1 \\\Xhline{.5\arrayrulewidth}
        $B_{26}$ & $x_2,x_4,w$ & 1 \\\Xhline{.5\arrayrulewidth}
        $B_{27}$ & $x_1,x_3,w,z,t$ & 12 \\\Xhline{.5\arrayrulewidth}
        $B_{28}$ & $x_1,x_3,x_5,z,t$ & 1 \\\Xhline{.5\arrayrulewidth}
    \end{tabular}        \caption{Description of the coloring of $\widehat{W}$ where $u$ and $v$ have at most 13 common colors}
        \label{tab:13}
    \end{minipage}
    \hfill
    \begin{minipage}{0.45\textwidth}
        \centering
        \begin{tabular}{|c|c|c|}
        \hline
        Name & Vertices & Repetition  \\
        \hline
        \hline
        $B_1$ & $u,v,x_1,x_2,x_4$ & 2 \\\Xhline{.5\arrayrulewidth}
        $B_2$ & $u,v,x_1,x_3,x_4$ & 2 \\\Xhline{.5\arrayrulewidth}
        $B_3$ & $u,v,x_1,x_3$ & 2 \\\Xhline{.5\arrayrulewidth}
        $B_{4}$ & $u,v,x_2,x_5$ & 2 \\\Xhline{.5\arrayrulewidth}
        $B_{5}$ & $u,v,x_3,x_5$ & 2 \\\Xhline{.5\arrayrulewidth}
        $B_{6}$ & $u,v,x_2,x_4$ & 2 \\\Xhline{.5\arrayrulewidth}
        $B_{7}$ & $u,v,x_5,w$ & 2 \\\Xhline{.5\arrayrulewidth} \hline
        $B_8$ & $u,x_1,x_3,x_4$ & 4 \\\Xhline{.5\arrayrulewidth}
        $B_9$ & $u,x_1,x_3,x_4,t$ & 1 \\\Xhline{.5\arrayrulewidth}
        $B_{10}$ & $u,x_1,x_4,w,z$ & 1 \\\Xhline{.5\arrayrulewidth}
        $B_{11}$ & $u,x_1,x_4,z,t$ & 2 \\\Xhline{.5\arrayrulewidth}
        $B_{12}$ & $u,x_2,x_3,x_4,x_5$ & 7 \\\Xhline{.5\arrayrulewidth}
        $B_{13}$ & $u,x_2,x_4,w,z,t$ & 1 \\\Xhline{.5\arrayrulewidth}
        $B_{14}$ & $u,x_3,x_5,w$ & 1 \\\Xhline{.5\arrayrulewidth}
        $B_{15}$ & $u,x_3,w,z,t$ & 1 \\\Xhline{.5\arrayrulewidth}
        $B_{16}$ & $u,x_4,w,z,t$ & 9 \\\Xhline{.5\arrayrulewidth}
        $B_{17}$ & $v,x_1,x_2,x_3,x_5,t$ & 7 \\\Xhline{.5\arrayrulewidth}
        $B_{18}$ & $v,x_1,x_2,x_4,x_5$ & 5 \\\Xhline{.5\arrayrulewidth}
        $B_{19}$ & $v,x_1,x_3,x_5,t$ & 1 \\\Xhline{.5\arrayrulewidth}
        $B_{20}$ & $v,x_1,x_4,w,z,t$ & 1 \\\Xhline{.5\arrayrulewidth}
        $B_{21}$ & $v,x_2,x_4,x_5,z$ & 2 \\\Xhline{.5\arrayrulewidth}
        $B_{22}$ & $v,x_2,x_5,w,z,t$ & 5 \\\Xhline{.5\arrayrulewidth}
        $B_{23}$ & $v,x_2,x_5,w,z$ & 6 \\\Xhline{.5\arrayrulewidth}
        $B_{24}$ & $x_1,x_3,x_5,z,t$ & 1 \\\Xhline{.5\arrayrulewidth}
        $B_{25}$ & $x_1,x_3,w,z,t$ & 12 \\\Xhline{.5\arrayrulewidth}
        $B_{26}$ & $x_2,x_4,w$ & 2 \\\Xhline{.5\arrayrulewidth}
    \end{tabular}        \caption{Description of the coloring of $\widehat{W}$ where $u$ and $v$ have at most 14 common colors}
        \label{tab:14}
    \end{minipage}
\end{table}

	\section{Exact value of $a_f$ for the examples}
	
	Let $W_{0}$ be a copy of the (underlying) graph $W$ of Figure~\ref{fig:biggadget1}. Let $W_{1}$ be the (planar) graph built from  $W_{0}$ by replacing the three edges $\set{uz, ux_{1}, ut}$, each with a distinct copy of $W$  where $u$ and $v$ are identified with the endpoints of the edge. 
	
	\begin{theorem}\label{thm-fa}
		We have $a_f(W_{1})=2+\frac{2}{25}$.
	\end{theorem}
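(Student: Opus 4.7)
The plan is to establish $a_f(W_1)=52/25$ by matching upper and lower bounds, paralleling the structure of Section~\ref{sec:lowerbound_for_fb}.

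For the upper bound $a_f(W_1)\leq 52/25$, I would exhibit an explicit $(52,25)$-vertex-arboricity coloring of $W_1$. Since $W_1$ is built from $W_0$ by replacing the three edges $uz$, $ux_1$, $ut$ with copies of $W$, the strategy is to first produce weighted catalogs of $(52,25)$-colorings of a single copy of $W$ parameterised by the number of common colors on its pair $\{u,v\}$, in the spirit of Tables~\ref{tab:13} and~\ref{tab:14}, but with forest classes in place of balanced ones. I would then select a compatible $(52,25)$-coloring of $W_0$ in which the endpoints of each distinguished edge carry the common-color count appearing in the catalog, and paste three matching copies of $W$ onto $W_0$ so that the copies' $u$ and $v$ inherit the colors of the endpoints. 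Because each added copy touches $W_0$ only at its $u$ and $v$, no cycle crosses the interface beyond the edge $uv$ already internal to a single copy, so the global color classes remain acyclic.

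For the lower bound $a_f(W_1)\geq 52/25$, I would transport the missing-triangle apparatus of Section~\ref{sec:lowerbound_for_fb} to vertex arboricity. Define $n_{p,q}$ as the largest integer such that a planar graph with triangular outer face admits a $(p,q)$-arboricity coloring in which $n_{p,q}$ colors each miss all three outer vertices; analysing $K_4$ exactly as in Lemma~\ref{lem:BoundingMissingColors} yields $n_{p,q}\leq 2p-4q$, since every acyclic subset of $K_4$ has at most two vertices. Mimicking Lemma~\ref{lem:MissingTriangles}, I would enumerate the maximal acyclic subsets of $W$ containing both $u$ and $v$ and identify a bounded family $\mathcal{T}'$ of triangles of $W$ such that every such maximal set misses at least one triangle of $\mathcal{T}'$ entirely. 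This produces an inequality of the form $|c(u)\cap c(v)|\leq |\mathcal{T}'|\cdot n_{p,q}$ in any $(p,q)$-arboricity coloring of $W$, which applies in particular to each of the three attached copies in $W_1$. Combining these with a covering count on $W_0$, where the $q$ colors of $u$ must fan out to the neighbours $z$, $x_1$, $t$, forces $p/q\geq 52/25$.

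The main obstacle lies in two places. First, enumerating the maximal acyclic subsets of $W$ containing $\{u,v\}$ is heavier than its balanced analog, because every cycle of $W$ (not only the negative ones) must be broken, so the forbidden configurations multiply and additional short cycles through $w$ and through the $x_i$'s come into play. Second, the explicit $(52,25)$-coloring of $W$ with the precise common-color multiplicities must match the lower-bound inequalities tightly so that both sides meet exactly at $52/25$; producing a table of weighted acyclic classes with the correct arithmetic will require the kind of LP-style bookkeeping already exhibited in Table~\ref{tab:color_classes}.
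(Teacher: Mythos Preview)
Your lower-bound plan has a genuine gap: you try to transport the triangle-missing machinery of Section~\ref{sec:lowerbound_for_fb}, but that machinery depends on each designated triangle being completed to a $K_4$ (or to a more elaborate gadget), which is what makes the bound $n_{p,q}\le 2p-4q$ available. The graph $W_1$ contains no such completions; its attached copies of $W$ are bare. For an unreinforced triangle $T$ in a $(p,q)$-arboricity coloring the only bound on the number of colors missing $T$ is the trivial $p-\lceil 3q/2\rceil$, which at $p/q$ near $2$ is far too slack to push $|c(u)\cap c(v)|$ down to anything like $5p-10q$. So the inequality $|c(u)\cap c(v)|\le|\mathcal{T}'|\cdot n_{p,q}$ you are aiming for simply does not hold in $W_1$, and the chain never closes at $52/25$.

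The paper's argument avoids triangles altogether and works directly with forest sizes in $W$. Two structural facts drive everything: (i) every induced forest of $W$ has at most $5$ vertices, while every induced forest containing both $u$ and $v$ has at most $4$; and (ii) every $5$-vertex induced forest of $W$ containing $u$ must contain at least two of $z$, $x_1$, $t$. Fact (i), applied to one attached copy of $W$, gives the upper bound $a\le 5p-10q$ on the number of common colors across any of the edges $uz,ux_1,ut$ by counting incidences on the eight interior vertices. Fact (ii), applied to $W_0$, says that among the color classes containing $u$ only at most $3a/2$ can be of size $5$; a second incidence count on $W_0-\{u,v\}$ then yields $a\ge (22q-10p)/3$. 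Combining the two inequalities forces $p/q\ge 52/25$. Your sketch does not isolate either (i) or (ii); the ``covering count on $W_0$'' you mention cannot be completed without them. For the upper bound your pasting idea is sound but more elaborate than needed: a single $(52,25)$-coloring of $W$ with $|c(u)\cap c(v)|=|c(u)\cap c(z)|=|c(u)\cap c(x_1)|=|c(u)\cap c(t)|=10$ suffices, since that same coloring (relabeled) can populate each attached copy.
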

	
	\begin{proof}
	 We can verify by basic case analysis that maximum induced forest of $W$ are of order 5. Furthermore, we claim that any forest of $W$ containing both $u$ and $v$ is of order at most 4 (two more vertices). This follows from the list of the maximal balanced sets containing both $u$ and $v$ given in \Cref{sec:lowerbound_for_fb}. Of those ten balanced sets, only three are of size $5$ ($B_{1}, B_{2}, B_{3}$) each of which contain a cycle.
	 
     To prove that $a_f(W_{1})\geq 2+\frac{2}{25}$, let $\phi$ be a $(p,q)$-coloring of $W_{1}$ where every color class is a forest. Let $$a=\max\{|\phi(u)\cap \phi(z)|,|\phi(u)\cap \phi(x_{1})|,|\phi(u)\cap \phi(t)|\}.$$ We first claim $a\leq 5p-10q$.  
     
     Suppose $W_{a}$ is the copy of $W$ corresponding to an edge that gives us $a$. Observe that each color in $\phi(u)\cap \phi(v)$ appears at most on two other vertices of $W_{a}$. Each one in $(\phi(u)\cup \phi(v)) - (\phi(u)\cap \phi(v))$ appears on at most four other vertices. Then the remaining $p-|\phi(u)\cup \phi(v)|$ colors appear on at most $5$ vertices. As each of the $8$ vertices of $V(W_{a})-\set{u,v}$ should receive $q$ colors, we have 
		
		$$ 2a +4(q-a)+4(q-a)+5(p-2q+a)\geq 8q. $$
		
		which implies that $a\leq 5p-10q$. 
		
		To give a lower bound on $a$, we first show that every maximum acyclic set (i.e., of order 5) containing $u$ must include at least two among $z$, $t$, and $x_1$. 
	Assume $A$ is an acyclic set containing $u$ and $z$ but not $t, x_1$ nor $ v$. Then $x_2\notin A$. Thus among four vertices $w, x_3, x_4, x_5$ three should be contained in $A$. As there should be no triangle, either $A$ contains $w, x_3, x_5$ or $x_3, x_4, x_5$, but each, together with $z$ and $u$, induces a $C_5$. The vertex $t$ is symmetric with $z$. Assume $x_1\in A$ and  $z, t, v\notin A$. As $A$ induces no triangle, $x_2, x_5\notin A$. However, the three remaining vertices, $w, x_3, x_4$, induce a triangle. 

		Let $a_{uv}$ be the common colors in $\phi(u)$ and $\phi(v)$ in $W_{0}$ (on which $W_1$ is built). As every acyclic set of order $5$ containing $u$ must include at least two among $z$, $t$, and $x_1$, at most $\frac{3a}{2}$ colors in $\phi(u)-(\phi(u)\cap \phi(v))$ can appear four times in $W_{0}-\{u,v\}$. Applying a more precise calculations on the number of colors on $W_{0}-\set{u,v}$ we conclude: 
	$$2a_{uv}+4\times \frac{3a}{2} +3 \times (q-a_{uv}-\frac{3a}{2})+4(q-a_{uv})+5(p-2q+a_{uv})\geq 8q,$$
	which simplifies to $a \geq \frac{22q-10p}{3}$. Combined with $a\leq 5p-10q$, we get $\frac{p}{q}\geq 2+\frac{2}{25}$. 
	
	To complete the proof, it is enough to give a $(52,25)$-colorings $c$ of $W$ where $$|c(u)\cap c(v)|=|c(u)\cap c(z)|=|c(u)\cap c(x_{1})|=|c(u)\cap c(t)|=10.$$ One such coloring is given in  \cref{tab:forest_coloring}. 		\begin{table}[htb]
                        \centering
                        \begin{tabular}{|c|c|c|}
                                \hline
                                Name & Vertices &  Repetition \\ 
                                \hline
                                \hline                          
                                $F_{0}$ & $u, x_1, x_3, x_4, z$ & 1 \\\Xhline{.25\arrayrulewidth}
                                $F_{1}$ & $u, x_1, x_3, x_4, t$ & 1 \\\Xhline{.25\arrayrulewidth}
                                $F_{2}$ & $u, x_1, x_3, w, t$ & 4 \\\Xhline{.25\arrayrulewidth}
                                $F_{3}$ & $u, x_1, x_4, w, z$ & 4 \\\Xhline{.25\arrayrulewidth}
                                $F_{4}$ & $u, x_4, w, z, t$ & 5 \\\Xhline{.25\arrayrulewidth}
                                $F_{5}$ & $u, v, x_2, x_4$ & 3 \\\Xhline{.25\arrayrulewidth}
                                $F_{6}$ & $u, v, x_2, x_5$ & 4 \\\Xhline{.25\arrayrulewidth}
                                $F_{7}$ & $u, v, x_3, x_5$ & 3 \\\Xhline{.25\arrayrulewidth}
                                $F_{8}$ & $x_1, x_2, x_3, x_4, t$ & 3 \\\Xhline{.25\arrayrulewidth}
                                $F_{9}$ & $x_1, x_2, x_3, x_5, t$ & 2 \\\Xhline{.25\arrayrulewidth}
                                $F_{10}$ & $x_1, x_2, x_3, t, v$ & 2 \\\Xhline{.25\arrayrulewidth}
                                $F_{11}$ & $x_1, x_2, x_4, x_5, v$ & 1 \\\Xhline{.25\arrayrulewidth}
                                $F_{12}$ & $x_1, x_2, x_4, z, v$ & 1 \\\Xhline{.25\arrayrulewidth}
                                $F_{13}$ & $x_1, x_3, x_4, x_5, z$ & 4 \\\Xhline{.25\arrayrulewidth}
                                $F_{14}$ & $x_1, x_3, w, t, v$ & 2 \\\Xhline{.25\arrayrulewidth}
                                $F_{15}$ & $x_2, x_4, x_5, z, v$ & 2 \\\Xhline{.25\arrayrulewidth}
                                $F_{16}$ & $x_2, x_5, w, z, v$ & 4 \\\Xhline{.25\arrayrulewidth}
                                $F_{17}$ & $x_2, x_5, w, t, v$ & 2 \\\Xhline{.25\arrayrulewidth}
                                $F_{18}$ & $x_2, w, z, t, v$ & 1 \\\Xhline{.25\arrayrulewidth}
                                $F_{19}$ & $x_3, x_5, w, z, t$ & 3 \\\Xhline{.25\arrayrulewidth}
                                \hline
                        \end{tabular}
                        \caption{ \centering Description of a coloring of $W$}
                        \label{tab:forest_coloring}
                \end{table}
	\end{proof}
	
	\begin{remark}{\em
	Unlike the case of balanced coloring, here we give a concrete example of a planar graph satisfying $a_f(W_1)=2+\frac{2}{25}$. This the best bound that can be built with our methods using the graph $W$. For each of the value $\gamma \in \{7, 8, 9, 0\}$, one can find a $(52,25)$-coloring of $W$ where each color class is a forest, the number of common colors on $u$ and $v$ is the value $\gamma$, and the number od common colors on two vertices of any other edge belongs to the set $\{7, 8, 9, 0\}$. This allows us to extend the coloring when more copies of $W$ are added to our graph. \\
	It should also be noted that if we consider the graph built from $K_4$ by replacing each edge with a copy of $W$, then the resulting graph has fractional arboricity $2+\frac{2}{31}$.}
	\end{remark}

	\section*{Acknowledgment} This work has received support under the program ``Investissement d'Avenir" launched by the French Government and implemented by ANR, with the reference ``ANR‐18‐IdEx‐0001" as part of its program ``Emergence".
	
	\bibliographystyle{plain}
	\bibliography{references}
	
\end{document}